\documentclass[12pt]{amsart}

\usepackage{amsfonts, amsthm, amsmath}

\usepackage{rotating}

\usepackage{tikz}

\usepackage{graphics}

\usepackage{amssymb}

\usepackage{amscd}

\usepackage[latin2]{inputenc}

\usepackage{t1enc}

\usepackage[mathscr]{eucal}

\usepackage{indentfirst}

\usepackage{graphicx}

\usepackage{graphics}

\usepackage{pict2e}

\usepackage{mathrsfs}

\usepackage{enumerate}
\usepackage[pagebackref]{hyperref}
\usepackage{cite}
\usepackage{color}
\usepackage{epic}
\usepackage{hyperref} 
\usepackage{framed}
\usepackage{mathabx}
\usepackage{booktabs}
\usepackage{makecell}
\newcolumntype{V}{!{\vrule width 2pt}}

\numberwithin{equation}{section}
\topmargin 0.7in
\textheight=8.4in
\textwidth=6.4in
\voffset=-.68in
\hoffset=-.68in

\def\blue{\textcolor{blue}}
\def\red{\textcolor{red}}

\theoremstyle{plain}

\newtheorem{theorem}{Theorem}[section]

\newtheorem{lemma}[theorem]{Lemma}

\newtheorem{corollary}[theorem]{Corollary}

\theoremstyle{definition}

\newtheorem{example}[theorem]{Example}

\newtheorem{conj}[theorem]{Conjecture}

\newtheorem{remark}[theorem]{Remark}

\newtheorem{?}[theorem]{Problem}

\newcommand{\N}{\mathbb{N}}
\newcommand{\Po}{\mathbb{P}}
\newcommand{\Z}{\mathbb{Z}}

\newcommand{\PP}{\mathcal{P}}
\newcommand{\B}{\mathcal{B}}
\newcommand{\C}{\mathcal{OC}}
\newcommand{\CC}{\mathcal{C}}
\newcommand{\Od}{\mathcal{O}}

\newcommand{\id}{\mathrm{id}}
\newcommand{\G}{\mathcal{G}}
\newcommand{\Hw}{\mathcal{H}}

\def\des{\mathsf{des}}
\def\asc{\mathsf{asc}}
\def\exc{\mathsf{exc}}
\def\vexc{\widetilde\exc}
\def\Peak{\mathrm{PEAK}}
\def\cPeak{\mathsf{cPEAK}}
\def\S{\mathfrak{S}}
\def\cdes{\mathsf{cdes}}
\def\casc{\mathsf{casc}}
\newcommand{\BP}{\mathcal{BP}}
\newcommand{\cBP}{c\mathcal{BP}}

\def\m{{\bf m}}

\begin{document}

\title[A decomposition of ballot permutations]{A decomposition of ballot permutations, pattern avoidance and Gessel walks}

\author[Z. Lin]{Zhicong Lin}
\address[Zhicong Lin]{Research Center for Mathematics and Interdisciplinary Sciences, Shandong University, Qingdao 266237, P.R. China}
\email{linz@sdu.edu.cn}

\author[D.G.L.~Wang]{David G.L. Wang}
\address[David G.L. Wang]{School of Mathematics and Statistics \& Beijing Key Laboratory on MCAACI, Beijing Institute of Technology, 102488 Beijing, R.R. China}
\email{glw@bit.edu.cn}

\author[T. Zhao]{Tongyuan Zhao}
\address[Tongyuan Zhao]{College of Science, China University of Petroleum, 102249 Beijing, P.R. China}
\email{zhaotongyuan@cup.edu.cn}

\date{\today}

\begin{abstract}

A permutation whose any prefix has no more descents than ascents is called a ballot permutation. 
In this paper, we present a decomposition of ballot permutations that enables us to 
construct a bijection between ballot permutations and odd order permutations, which proves a set-valued extension of a conjecture due to  Spiro using the statistic of peak values. This bijection also preserves  the neighbors of the largest letter in permutations and thus resolves a refinement of Spiro's conjecture proposed by Wang and Zhang. Our decomposition can be extended to well-labelled positive paths, a class of generalized ballot permutations arising from  polytope theory, that were enumerated by Bernardi,  Duplantier and Nadeau. 

We will also investigate the enumerative aspect of ballot permutations avoiding a single pattern of length $3$ and establish a connection between $213$-avoiding ballot permutations and Gessel walks.
\end{abstract}


\keywords{Ballot permutations; Odd cycles; Peak values; Pattern avoidance; Gessel walks}

\maketitle


\section{Introduction}\label{sec1: intro}

The integer sequence 
$$\{b_n\}_{n\geq0}=\{1, 1, 1, 3, 9, 45, 225, 1575, 11025, 99225,\ldots\}$$
defined by the exponential generating function 
$$
\sum_{n\geq0}b_n\frac{z^n}{n!}=\sqrt{\frac{1+z}{1-z}}
$$
has several algebraic or combinatorial interpretations (see~\cite[A000246]{oeis}), among which are two classes of restricted permutations:
\begin{itemize}
\item permutations  whose any prefix has no more descents than ascents, called {\em ballot permutations};
\item permutations which are the products of cycles with odd lengths, called {\em odd order permutations}.
\end{itemize}
Denote by $\B_n$ and $\Od_n$ the set of ballot permutations and odd order permutations of length $n$, respectively. For instance, 
\begin{align*}
\B_4&=\{1234,1243,1324,1342,1423,2314,2341,2413,3412\}\quad\text{and}\\
\Od_4&=\{\id, (1)(234),(1)(243),(134)(2),(143)(2),(124)(3),(142)(3),(123)(4),(132)(4)\}.
\end{align*}
The fact that $|\Od_n|=b_n$ is an application of the combinatorial Exponential Formula~\cite[Corollary~5.1.6]{st2}, while $|\B_n|=b_n$ was proved by Bernardi,  Duplantier and Nadeau~\cite{BDN}. 

Recently, Spiro~\cite{spi} found a refinement of $|\B_n|=|\Od_n|$. Let $\S_n$ be the set of permutations of $[n]:=\{1,2,\ldots,n\}$. For each $\pi=\pi_1\pi_2\cdots\pi_n\in\S_n$ with $\pi_i=\pi(i)$, define 
\begin{align*}
\asc(\pi)&=|\{i\in[n-1]: \pi_i<\pi_{i+1}\}|,\\
\des(\pi)&=|\{i\in[n-1]: \pi_i>\pi_{i+1}\}| \quad\text{and}\\
\exc(\pi)&=|\{i\in[n-1]: \pi_i>i\}|,
\end{align*}
called the {\em ascent number}, the {\em descent number} and the {\em excedance number} of $\pi$, respectively. The distribution of descent number or excedance number over permutations interprets the classical {\em Eulerian polynomials} $A_n(t)$ (see~\cite[Chapter~1]{st1}), which can be defined by 
$$
\frac{A_n(t)}{(1-t)^{n+1}}=\sum_{k\geq0}(k+1)^nt^k.
$$
In other words, the Eulerian polynomial $A_n(t)$ equals
\begin{equation}\label{euler}
\sum_{\pi\in\S_n}t^{\des(\pi)}=\sum_{\pi\in\S_n}t^{\exc(\pi)}. 
\end{equation}
Interestingly, Spiro introduced a variation of excedance numbers, whose distribution over $\Od_n$ has the same distribution as descent numbers over $\B_n$. For a cycle $C=(c_1c_2\cdots c_k)$, let 
$$
\casc(C):=|\{i\in[k]:c_i<c_{i+1}\text{ where $c_{k+1}=c_1$}\}| \quad\text{and}\quad \cdes(C):=k-\casc(C)
$$
be the {\em cyclic ascent number} and the {\em cyclic descent number} of $c$, respectively. Note that  $\exc(\pi)$ can be defined as
$$
\exc(\pi)=\sum_C \casc(C),
$$
 where the sum runs over all cycles $C$  of $\pi$.  Introduce the variation of excedance number 
$$
\vexc(\pi)=\sum_C \min(\casc(C),\cdes(C)),
$$
 where the sum runs over all cycles $C$  of $\pi$. For example, if $\pi = (1, 8, 5, 6, 4)(2, 3, 9)(7)$ written as product of cycles, then $\vexc(\pi)=2+1+0=3$. The following conjecture was posed by Spiro~\cite[Conjecture~1.2]{spi} with several important special cases verified. 
 
 \begin{conj}[Spiro]\label{conj:Spiro}
 For $n\geq1$,
 \begin{equation}\label{Spiro}
\sum_{\pi\in\B_n}t^{\des(\pi)}=\sum_{\pi\in\Od_n}t^{\vexc(\pi)}. 
\end{equation}
 \end{conj}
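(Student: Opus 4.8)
The plan is to prove \eqref{Spiro} bijectively, by constructing an explicit map $\Phi\colon\B_n\to\Od_n$ with $\des(\pi)=\vexc(\Phi(\pi))$, and indeed to refine it so that $\Phi$ transports the set of peak values to a cyclic analogue. The generating-function shape in the introduction dictates the architecture: since $\Od_n$ is assembled by the Exponential Formula from odd cycles and $\vexc$ is by definition additive over cycles, the right-hand side of \eqref{Spiro} is the set-construction of a per-cycle polynomial in $t$. I would therefore try to \emph{decompose} each ballot permutation into components indexed by the blocks of a set partition of $[n]$ into odd-size blocks, each component encoding a single odd cycle, and then match the two statistics component by component.

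First I would make the decomposition precise by reading $\pi$ as the lattice path that steps up on each ascent and down on each descent, so that the ballot condition says exactly that the path stays weakly above the axis. Calling an indecomposable component an \emph{atom}, the small cases are already telling: every weakly increasing ballot permutation must split into singletons (mirroring products of fixed points), and in $\B_3$ only $132$ and $231$ remain indecomposable---precisely $(3-1)!=2$ atoms of size $3$, matching the two odd $3$-cycles---while $123$ breaks into three singletons. I would prove that atoms occur only in odd sizes, with $(k-1)!$ of them on any fixed set of $k$ labels, and---this is the point forced by the $\exp$ (rather than $1/(1-A)$) shape of the EGF---that the components are \emph{not} intervals of the one-line word but are supported on general subsets of $[n]$, read off by a traversal of $\pi$. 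Part of the work is then to re-express the linear statistic $\des(\pi)$ as a sum of per-component contributions.

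With the decomposition in hand, it remains to construct, for each odd $k$, a bijection from the atoms of size $k$ to the odd $k$-cycles that sends the per-atom contribution to $\min(\casc,\cdes)$; since both families have size $(k-1)!$, this is the single local statement to which the whole theorem reduces, and the Exponential Formula then matches the two sides of \eqref{Spiro}. To obtain the advertised strengthenings I would build this component bijection so that it additionally carries the peak values $\Peak$ of the atom to the cyclic peak values $\cPeak$ of the cycle and respects the two neighbors of the largest label of the block; assembling over components yields both the set-valued extension and the neighbor-of-$n$ refinement, with \eqref{Spiro} recovered by setting the peak variables to $1$.

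The hard part will be twofold. Structurally, one must pin down the traversal that produces the correct non-interval components and prove existence and uniqueness of the atomic factorization together with the additivity of $\des$ over components; because the blocks are arbitrary subsets, this is a genuine cycle-lemma-type construction rather than a slicing of the word. Locally, the obstacle is $\min(\casc,\cdes)$ itself: it is a symmetric, \emph{global} feature of a cycle---invariant under reversing the cyclic order, which interchanges $\casc$ and $\cdes$---so matching it to a descent count requires choosing a canonical orientation of each cycle and showing that its minority of cyclic ascents or descents equals the contribution of the corresponding atom. Keeping this under control, especially for cycles in which $\casc$ and $\cdes$ differ by only one, while simultaneously preserving $\Peak\mapsto\cPeak$ and the neighbors of the maximum, is where I expect the main effort to lie.
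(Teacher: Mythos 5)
Your outline correctly guesses the architecture a bijective proof should have (odd components assembled \`a la the Exponential Formula, a canonical orientation to tame the symmetry of $\min(\casc,\cdes)$), but as a proof it has a genuine gap: everything is deferred to two constructions you never supply, and you say so yourself. First, you posit an atomic factorization of each $\pi\in\B_n$ into odd-size components supported on \emph{arbitrary} subsets of $[n]$, with $\des$ additive over components; no traversal producing it is defined, and additivity is not even well-posed as stated, since once blocks interleave in the one-line word the descents of $\pi$ sit between letters of different would-be blocks, so the per-atom weight itself is undefined. Second, the ``single local statement'' --- a bijection from size-$k$ atoms to odd $k$-cycles carrying the atom's weight to $\min(\casc(C),\cdes(C))$ --- is exactly the hard content of \eqref{Spiro}; observing that both families have cardinality $(k-1)!$ only recovers the $t=1$ enumeration $|\B_n|=|\Od_n|$, not the refined identity.

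The paper's proof is instructive about why a one-level atomic factorization is not the route it takes. It works in two levels: it first extracts the maximal consecutive Dyck segments of $\pi$ (these are \emph{interval}-supported, of odd length $2m+1$, and contribute exactly $m$ descents wherever they are later placed), replacing their interiors by boxes to obtain a box-permutation $\sigma\in\BP(A)$; it then applies a recursive bijection $\psi:\BP(A)\to\cBP(A)$ that peels off one odd box-cycle at a time according to the parity of the position of the least letter $a_1$, with the case-analysis map $\phi:\BP^{(1)}(A)\to\BP^{(3)}(A)$ implementing precisely the orientation choice you anticipate (ordinary versus \emph{reverse} odd box-cycles); finally the Dyck segments are re-inserted into the boxes, which is legitimate because $\psi$ preserves the box-neighbor-set. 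The non-interval supports you predict emerge only \emph{a posteriori} as supports of the cycles of $\Psi(\pi)$: for instance $\Psi(3412)=(1,3,4)(2)$, and the block $\{1,3,4\}$ is not read off from $3412$ by any intrinsic traversal but is produced by extracting the Dyck segment $341$ and running $\psi$ on the skeleton $3\,\Box\,1\,2$. So your plan matches the theorem's statement in spirit, including the set-valued $\Peak\mapsto\cPeak$ refinement, but its two pillars are exactly what must be built, and the paper builds them by this two-level Dyck-plus-skeleton mechanism rather than by the one-step atomic decomposition and per-atom bijection your proposal rests on.
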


Wang and Zhang~\cite{WZ} proposed a refinement of Spiro's conjecture by tracking the neighbors of the largest letter in permutations. For $k\in[n]$ and $\pi\in\S_n$, 
\begin{itemize}
\item if  $1<\pi^{-1}(k)=\ell<n$, then the letters $\pi(\ell-1)$ and $\pi(\ell+1)$ are called the {\em neighbors of $k$} in $\pi$;
\item if $\pi(k)\neq k$, then the letters $\pi^{-1}(k)$ and $\pi(k)$ are called the {\em cyclic neighbors of $k$} in $\pi$. 
\end{itemize}

\begin{conj}[Wang and Zhang]\label{conj:WZ}
For any letters $i\neq j$, let $\B_n(i,j)$ (resp.~$\Od_n(i,j)$) be the set of elements $\pi\in\B_n$ (resp.~$\pi\in\Od_n$) such that $i$ and $j$ are neighbors (resp.~cyclic neighbors) of $n$ in $\pi$. Then, 
 \begin{equation}\label{eq:WZ}
\sum_{\pi\in\B_n(i,j)}t^{\des(\pi)}=\sum_{\pi\in\Od_n(i,j)}t^{\vexc(\pi)}. 
\end{equation}
\end{conj}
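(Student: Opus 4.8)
The plan is to deduce Conjecture~\ref{conj:WZ} from an explicit bijection $\phi\colon\B_n\to\Od_n$, which I would build from a structural decomposition of ballot permutations developed for this purpose. I would require $\phi$ to enjoy two compatibility properties: it sends the descent number to the variation of the excedance number, $\des(\pi)=\vexc(\phi(\pi))$, and it preserves the unordered pair of neighbors of the largest letter, meaning that $\{i,j\}$ is the neighbor pair of $n$ in $\pi$ if and only if $\{i,j\}$ is the cyclic neighbor pair of $n$ in $\phi(\pi)$. Granting such a $\phi$, for each pair $i\neq j$ it restricts to a bijection $\B_n(i,j)\to\Od_n(i,j)$ under which $\des$ and $\vexc$ match term by term, so that summing $t^{\des}$ over $\B_n(i,j)$ equals summing $t^{\vexc}$ over $\Od_n(i,j)$, which is precisely~\eqref{eq:WZ}. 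Hence the whole statement reduces to producing one bijection with these two properties.

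To make the construction rigid I would carry along a finer, set-valued invariant: the set $\Peak(\pi)$ of peak values on the ballot side against the set $\cPeak$ of cyclic peak values on the odd-order side, arranging that $\phi$ carries $\Peak(\pi)$ onto $\cPeak(\phi(\pi))$. Peak values are intrinsic local data attached to the letters of a word, so they are the natural quantity to follow through the recursive surgery of the decomposition, and recording them set-valuedly rather than merely counting them is what simultaneously yields the set-valued strengthening of Spiro's conjecture and pins the map down enough to control $\des$. The bijection itself I would define by induction along the decomposition: a ballot permutation splits into smaller ballot blocks, each block is folded into a cycle, and the disjoint cycles assemble into $\phi(\pi)\in\Od_n$, with the decomposition designed so that every such cycle has odd length; the peak bookkeeping then delivers $\des(\pi)=\vexc(\phi(\pi))$.

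The heart of the matter, and the source of the refinement in Conjecture~\ref{conj:WZ}, is tracking the immediate surroundings of the letter $n$. On the ballot side these are the two letters $\pi(\ell-1)$ and $\pi(\ell+1)$ flanking $n$ at position $\ell=\pi^{-1}(n)$; on the odd-order side they are the two letters $\phi(\pi)^{-1}(n)$ and $\phi(\pi)(n)$ adjacent to $n$ inside its cycle. I would single out the block of the decomposition containing $n$ and verify that, as this block is folded into a cycle, the word-neighbors of $n$ become exactly the cyclic neighbors of $n$. The main obstacle I anticipate is precisely this local verification: one must show that the decomposition never detaches $n$ from either flanking letter, and that the orientation ambiguity, namely which flanking letter becomes the predecessor of $n$ and which becomes its successor, is immaterial once we pass to the unordered pairs recorded by $\B_n(i,j)$ and $\Od_n(i,j)$.

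Finally, the hypotheses $1<\pi^{-1}(n)<n$ and $\phi(\pi)(n)\neq n$ underlying~\eqref{eq:WZ} exclude the degenerate configurations, and I would check that these fall to the base of the induction. When $n$ occupies the last position of a ballot permutation, $\pi=\sigma\,n$ with $\sigma\in\B_{n-1}$, which should correspond to adjoining the fixed point $(n)$ to an odd order permutation of $[n-1]$; here $n$ has a single neighbor and is a fixed point on the odd-order side, so it lies outside the scope of~\eqref{eq:WZ}. It then remains only to confirm that the generic case $1<\ell<n$ is the one realized by the neighbor-preserving step, which completes the reduction.
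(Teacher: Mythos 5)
Your reduction is exactly the paper's route: Theorem~\ref{thm:main} constructs a bijection $\Psi:\B_n\rightarrow\Od_n$ carrying $(\des,\Peak)$ to $(\vexc,\cPeak)$, and since $n$, being the largest letter, is automatically a peak whenever $1<\pi^{-1}(n)<n$ (resp.\ a cyclic peak whenever it is not fixed), the set-valued peak invariant restricted to the entry $n$ gives precisely the restriction $\B_n(i,j)\rightarrow\Od_n(i,j)$ that proves~\eqref{eq:WZ}, with your degenerate case ($n$ in last position corresponding to the fixed point $(n)$) handled the same way. The only part your sketch leaves unspecified---how the blocks are cut and folded into odd cycles---is exactly what the paper supplies via the extraction of maximal Dyck factors and the box-neighbor-set-preserving bijection $\psi$ of Theorem~\ref{thm:box}, so the two arguments coincide in substance.
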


Motivated by Gessel's combinatorial interpretation of a decomposition of formal Laurent series in terms of lattice paths~\cite{ges}, Wang and Zhao~\cite{zhao} found a {\em reversal-concatenation} decomposition of ballot permutations and proved Conjecture~\ref{conj:Spiro}. Afterwards, Sun and Zhao~\cite{SZ} completed a proof of Conjecture~\ref{conj:WZ}.  However, both proofs of Conjectures~\ref{conj:Spiro} and~\ref{conj:WZ} uses generating function and a bijective proof of~\eqref{Spiro}  still remains mysterious. On the other hand, the so-called {\em Foata's first fundamental transformation} (see~\cite[pp.~197-199]{lot}) on permutations establishes~\eqref{euler} bijectively. In this paper, we present an algorithm that decomposes ballot permutations into odd cycles, which proves a far reaching generalization of both Conjectures~\ref{conj:Spiro} and~\ref{conj:WZ}. 

Let $\pi\in\S_n$ and $k\in\{3,4,\ldots,n\}$. The value $k$ is called a  {\em peak} of $\pi$ if $1<\pi^{-1}(k)=\ell<n$ and $\pi(\ell-1)<k>\pi(\ell+1)$, and is called a {\em cyclic peak} of $\pi$ if $\pi^{-1}(k)<k>\pi(k)$. Deonte by $\Peak(\pi)$ (resp.~$\cPeak(\pi)$) the set of pairs $(b,\{a,c\})$ such that $b$ is a peak (resp.~cyclic peak) of $\pi$ and $a,c$ are neighbors (resp.~cyclic neighbors) of $b$ in $\pi$. For example, if $\pi = 839164752=(1, 8, 5, 6, 4)(2, 3, 9)(7)\in\S_9$, then 
\begin{align*}
\Peak(\pi)&=\{(9,\{1,3\}), (6,\{1,4\}),(7,\{4,5\})\},\\
\cPeak(\pi)&=\{(8,\{1,5\}), (6,\{4,5\}),(9,\{2,3\})\}.
\end{align*}

\begin{theorem}\label{thm:main}
There exists a bijection $\Psi: \B_n\rightarrow\Od_n$ that transforms the pair $(\des,\Peak)$ to $(\vexc,\cPeak)$. 
\end{theorem}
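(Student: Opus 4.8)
The plan is to construct $\Psi$ as an explicit algorithm rather than through generating functions, because only an explicit map can be shown to carry the entire set $\Peak(\pi)$, with its neighbor pairs, onto $\cPeak(\Psi(\pi))$. The model is Foata's first fundamental transformation, which turns the linear descent structure of a word into the cyclic excedance structure of its cycle form; I will engineer a variant whose output is always a product of odd cycles and which sends each word-peak to a cyclic peak bearing the same unordered pair of neighbors. The organizing observation is a dichotomy for the largest letter: for $\pi\in\B_n$ with $n\ge 2$ the ballot condition forces $\pi_1<\pi_2$, so $n$ is never in the first position, and hence either $\pi_n=n$, in which case $n$ is neither a peak of $\pi$ nor (after the map) part of a nontrivial cycle, or $2\le\pi^{-1}(n)\le n-1$, in which case $n$ is a peak with a left neighbor $a$ and a right neighbor $c$ satisfying $a<n>c$. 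The analogous dichotomy on the cycle side is that $n$ is a cyclic peak of any $\sigma\in\Od_n$ precisely when it is not a fixed point. Matching these, and then iterating downward through the remaining peak values, is how the cycles will be read off.

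Concretely, I will first fix a canonical decomposition of a ballot permutation into the maximal ascending and descending runs cut out by its peaks and valleys, refining the reversal--concatenation decomposition of Wang and Zhao~\cite{zhao}. Writing the one-line word as an alternating sequence of runs that begins with an ascending run (since $\pi_1<\pi_2$), every peak value sits at an ascending-to-descending junction flanked by its two neighbors. The algorithm then threads these runs into cycles so that each such junction becomes a cyclic peak carrying the identical neighbor pair, by declaring the right neighbor to be the image and the left neighbor the preimage of the peak value (or the reverse), and splicing the interior letters of the runs to close up the arcs. The neighbor data at every peak is thereby forced, so $\Psi$ is rigid near peaks; the only freedom lies in how the valley letters and the run endpoints are joined, and this freedom will be pinned down by the requirement that the arcs close into cycles whose lengths are all odd.

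It then remains to verify the two statistic identities. The set identity $\Peak(\pi)=\cPeak(\Psi(\pi))$ will hold by construction, since each peak is transported together with its two neighbors. The numerical identity $\des(\pi)=\vexc(\Psi(\pi))$ is more delicate, and I will prove it by induction on the algorithmic moves, exhibiting a local invariant: each move changes $\des$ of the word and $\vexc=\sum_C\min(\casc,\cdes)$ of the partially built permutation by the same amount. One must be careful here, because the minimum defining $\vexc$ is not always attained on the descending side of a cycle; already for $n=3$ the two non-identity elements of $\Od_3$ realize this minimum on opposite sides, so a naive ``$\min=\cdes$ cycle by cycle'' argument is false and the invariant has to be tracked globally through the recursion.

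Finally I will describe the inverse: reading a permutation in $\Od_n$, orient each cycle at its cyclic peaks, emit the ascending and descending arcs in the order dictated by the peak values, and concatenate them into a word; proving that this word is ballot and that the two algorithms are mutually inverse establishes the bijection (and re-proves $|\B_n|=|\Od_n|$ along the way). I expect the principal obstacle to be the simultaneous control of parity and of the minimum in $\vexc$ under the rigidity already imposed on peaks. Deleting the largest letter drops the length of its cycle from odd to even, so the recursion cannot proceed one letter at a time but must remove or re-route a whole run at once; the burden is to show that this coarser move preserves the ballot inequality, always returns cycles of odd length, and alters $\des$ and $\vexc$ in lockstep. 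It is precisely the ballot condition---the path staying weakly above the axis---that forces these parity and descent constraints to be compatible with the forced neighbor data, and checking this compatibility across all run configurations is where the real work lies. A refinement that also tracks the neighbors of $n$ alone then recovers Conjecture~\ref{conj:WZ}, and summing over neighbor pairs recovers Conjecture~\ref{conj:Spiro}.
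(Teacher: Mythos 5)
Your proposal correctly identifies the strategic constraints---an explicit Foata-style algorithm, rigidity of the neighbor pairs at peaks, the parity obstruction that forces block moves rather than letter-by-letter recursion, and the fact that the minimum defining $\vexc$ can be attained on either side of a cycle---but it stops exactly where the real construction must begin. Your key sentence is that the freedom in joining valley letters and run endpoints ``will be pinned down by the requirement that the arcs close into cycles whose lengths are all odd.'' That requirement pins nothing down: a given collection of ascending and descending arcs can generally be spliced into all-odd-cycle permutations in many ways, and your per-peak orientation rule (left neighbor $\mapsto$ peak $\mapsto$ right neighbor, or the reverse) neither selects one splicing canonically nor guarantees that any consistent choice closes up at all---a system of per-peak orientation choices need not be realizable by a single permutation. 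Likewise the ``local invariant'' for $\des=\vexc$ is announced but never exhibited; since you yourself observe that the minimum jumps between $\casc$ and $\cdes$ (already in $\Od_3$), there is no evidence that a move-by-move invariant exists for moves you have not specified. As written, the proposal is a list of correct desiderata plus the dichotomies for the letter $n$, not a proof.

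The paper resolves precisely this point with a different decomposition unit, and seeing it clarifies why runs alone are too fine. Instead of single runs, it extracts the maximal consecutive factors of $\pi$ that are Dyck permutations (odd length, equally many ascents and descents) and collapses each to a box, retaining the two boundary letters; the skeleton that remains is a \emph{box-permutation}: increasing segments separated by boxes, carrying no descents at all. The entire difficulty is then concentrated in the self-contained bijection $\psi:\BP(A)\rightarrow\cBP(A)$ of Theorem~\ref{thm:box}, which peels off one odd box-cycle at a time according to the \emph{parity of the position of the smallest letter} $a_1$ (note: the recursion runs on the minimum, not downward from $n$ as in your plan), with the technical bijection $\phi:\BP^{(1)}(A)\rightarrow\BP^{(3)}(A)$ repairing the bad-parity case; the two admissible orientations (odd box-cycles versus \emph{reverse} odd box-cycles) are exactly what absorbs your $\min(\casc,\cdes)$ ambiguity. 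Parity then takes care of itself: a Dyck factor of length $2m+1$ re-expands a box by $2m-1$ letters, i.e.\ by an even net amount, so odd box-cycles become odd cycles. The pairs in $\Peak$ travel with the boxes because $\psi$ preserves the box-neighbor-set, and each Dyck factor is reinserted forward or reversed according to the orientation of its box in its cycle---an orientation chosen once per factor, not once per peak as in your plan, which matters because a single Dyck factor can contain several peaks whose orientations cannot be decoupled. To complete your run-based program you would need a substitute for this box machinery together with an explicit, invertible closing rule; the canonical choice you hope the odd-length condition will make for you is exactly the content of the paper's Section~\ref{sec:2}.
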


Our bijection $\Psi$ can be extended to {\em well-labelled positive paths}, a class of generalized ballot permutations arising from  polytope theory, that were introduced and enumerated by Bernardi,  Duplantier and Nadeau~\cite{BDN}.

\begin{table}
\hrule
{\small
\begin{tabbing}
xxxxxxxxxx\=xxxxxxxxxxxxxxxxxxxxxxxxxxxxxxxx\= xxxxxxxxxxxxxxxxxxxxxxxxxxxxxxx\=xxxxxxxxxxxxx\= \kill

Pattern $p$\> $\quad$First values of $|\B_n(p)|$: \> Notes\> OEIS seq.\\
\\
$123$\> $ 1, 1,2,2,5,5,14,14,32,32,\ldots$ \>  Catalan number $C(\lceil\frac{n}{2}\rceil)$  \> A208355  \\
$321$\> $1,1,3,9,28,90,297,1001, 3432,\ldots$ \> $\frac{3}{n+1}{2n-2\choose n-2}$ for $n>1$  \> A071724   \\
$132$\> $1,1,2,4,10,25,70,196,  588, 1764, \ldots$ \> Catalan product $C(\lfloor\frac{n}{2}\rfloor)C(\lfloor\frac{n+1}{2}\rfloor)$ \> A005817   \\
$231$\> Wilf-equivalent to pattern $132$ \>    \> A005817   \\
$213$\> $1,1,3,6,21,52,193,532, 2034,\ldots$\> Gessel walks ending on the $y$-axis  \> A151396    \\
$312$\>   Wilf-equivalent to pattern $213$ \>   \> A151396
\vspace{.1in}
\end{tabbing}
}
\hrule
\vspace{.2in}
\caption{Length-$3$ patterns  for Ballot permutations.\label{3-pattern}}
\end{table}

We will also investigate the enumerative aspect of pattern avoiding ballot permutations.
We say a word $w=w_1w_2\cdots w_n\in\Z^n$ {\em avoids the pattern} $\sigma=\sigma_1\sigma_2\cdots \sigma_k\in\S_k$ ($k\leq n$) if there does not exist $i_1<i_2<\cdots<i_k$ such that  the subword $w_{i_1}w_{i_2}\cdots w_{i_k}$ of $w$ is order isomorphism to $\sigma$. For a set $\mathcal{W}$ of words, let $\mathcal{W}(\sigma)$ be the set of $\sigma$-avoiding words in $\mathcal{W}$. Two patterns $\sigma$ and $\tau$ are said to be {\em Wilf-equivalent} over $\mathcal{W}$ if $|\mathcal{W}(\sigma)|=|\mathcal{W}(\tau)|$. 
One of the most famous enumerative results in pattern avoiding permutations, attributed to MacMahon and Knuth (see~\cite{kit,SS}), is that $|\S_n(\sigma)|=C(n)$ for each pattern $\sigma\in\S_3$, where 
$$C(n):=\frac{1}{n+1}\binom{2n}{n}
$$ is the $n$-th {\em Catalan number}. For pattern avoiding ballot permutations, our enumerative results are summarized in Table~\ref{3-pattern}, of which the most intriguing one is the connection between the pattern $213$ and Gessel walks.

A {\em Gessel walk} is a lattice path  confined to the quarter plane $\N^2$ using steps from the set $\{\uparrow,\downarrow,\nearrow,\swarrow\}$. See Fig.~\ref{G:GBwalks} for an example of Gessel walk. Around 2000, Ira Gessel conjectured that the number of $2n$-step Gessel walks that starting and ending at $(0, 0)$ has the simple hypergeometric formula 
$$
g_n=16^n\frac{(5/6)_n(1/2)_n}{(5/3)_n(2)_n},
$$
where $(a)_n:=a(a+1)\cdots(a+n-1)$ is the ascending factorial. The sequence $\{g_n\}_{n\geq0}$ appears as A135404 in the OEIS~\cite{oeis}. 
This attractive conjecture remained open for several years and now has at least three proofs: the original  proof was given by Kauers,  Koutschan and Zeilberger~\cite{KKZ} with the aid of computer, the second one was proposed by Bostan, Kurkova and Raschel~\cite{BKR} using deep machinery of complex analysis, and an elementary approach was finally found by Bousquet-M\'elou~\cite{BM2016} in $2016$. For the enumeration of other walks with small steps in the quarter plane, the reader is referred to~\cite{BMM}.

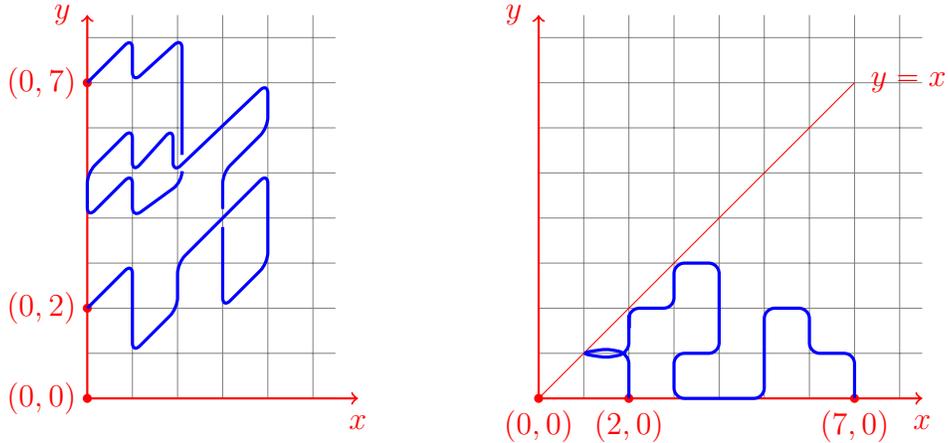
\begin{figure}
\begin{tikzpicture}[scale=0.6, axes/.style={red, thick}, walk/.style={blue, very thick, rounded corners}]
\draw[help lines] (0,0) grid (5.5, 8.5);
\begin{scope}[axes]
\draw[->] (0, 0) -- (6, 0) node[below=2pt] {$x$} coordinate(x axis);
\draw[->] (0, 0) -- (0, 8.5) node[left=2pt] {$y$} coordinate(y axis);
\foreach \y in {0,2,7}
{
\fill (0,\y) circle (0.1cm);
\draw[yshift=\y cm] node[left] {$(0,\y)$};
}
\end{scope}
\draw[walk]
(0,2) -- (1,3) -- (1,1) -- 
(2,2) -- (2,3) -- (4,5) -- 
(4,3) -- (3,2) -- (3, 3.8)
(3, 4.2) --(3,5) -- (4,6) -- 
(4,7) -- (1.9, 5) -- (1.9, 6) -- (1,5) -- 
(1,6) -- (0,5) -- 
(0,4) -- (1,5) -- 
(1,4) -- (2.1, 4.8) -- (2.1, 5)
(2.1, 5.4) -- (2.1, 8)  -- 
(1,7) -- (1,8) -- (0,7); 

\begin{scope}[xshift=10cm]
\draw[help lines] (0,0) grid (8.5, 8.5);
\begin{scope}[axes]
\draw[->] (0, 0) -- (8.5, 0) node[below=2pt] {$x$} coordinate(x axis);
\draw[->] (0, 0) -- (0, 8.5) node[left=2pt] {$y$} coordinate(y axis);
\foreach \x in {0,2,7} 
{
\fill (\x,0) circle (0.1cm);
\draw[xshift=\x cm] node[below] {$(\x,0)$};
}
\end{scope}
\draw[red]
(0,0) -- (7,7) node[right=2pt]{$y=x$};
\draw[walk] 
(2,0) -- (2, 1) .. controls (1.5, 1.1) .. (0.9, 1) .. controls (1.5, 0.9) .. (2, 1) -- 
(2,2) -- (2, 1.4) -- (2,2) -- 
(3,2) -- (3,3) -- (4,3) -- 
(4,1) -- (3,1) -- (3,0) -- 
(5,0) -- (5,2) -- (6,2) -- 
(6,1) -- (7,1) -- (7,0);
\end{scope}
\end{tikzpicture}
\caption{(Left) A $31$-step Gessel walk starting at $(0,2)$ and ending at $(0,7)$; (Right) a $19$-step Gouyou-Beauchamps walk starting at $(2,0)$ and ending at $(7,0)$.
\label{G:GBwalks}}
\end{figure}

For any $i,j\in\N$, let $\G(n;i,j)$ be the set of  $n$-step Gessel walks that starting at $(0,i)$ and ending at $(0, j)$. 
\begin{theorem}\label{thm:213}
For any $n\geq0$, 
$$
|\B_{n+1}(213)|=\sum_{j=0}^n|\G(n;0,j)|.
$$
\end{theorem}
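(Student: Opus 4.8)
The plan is to prove Theorem~\ref{thm:213} by exhibiting an explicit bijection between $\B_{n+1}(213)$ and the set $\bigcup_{j=0}^{n}\G(n;0,j)$ of $n$-step Gessel walks from the origin ending on the $y$-axis. The starting observation is that a Gessel walk is nothing but a decorated ballot path in disguise: among the four steps $\uparrow,\downarrow,\nearrow,\swarrow$, the $y$-coordinate always changes by $+1$ on $\{\uparrow,\nearrow\}$ and by $-1$ on $\{\downarrow,\swarrow\}$, while the $x$-coordinate increases by $1$ exactly on $\nearrow$, decreases by $1$ exactly on $\swarrow$, and is unchanged on $\{\uparrow,\downarrow\}$. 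Thus an $n$-step Gessel walk confined to $\N^2$ and ending on the $y$-axis is the same data as a $\pm1$ lattice path of length $n$ whose heights (the $y$-coordinate) stay weakly above $0$, in which some up-steps are marked $\nearrow$ and some down-steps are marked $\swarrow$, subject to the constraint that the induced partial sums of the marks (the $x$-coordinate) stay nonnegative throughout and return to $0$ at the end.

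Given $\pi=\pi_1\cdots\pi_{n+1}\in\B_{n+1}(213)$, I read the $n$ consecutive pairs $(\pi_i,\pi_{i+1})$ and let the $i$-th step go up if $\pi_i<\pi_{i+1}$ and down if $\pi_i>\pi_{i+1}$. The height after $i$ steps is then $\asc(\pi_1\cdots\pi_{i+1})-\des(\pi_1\cdots\pi_{i+1})$, which is $\ge0$ precisely because $\pi$ is a ballot permutation; the final height $\asc(\pi)-\des(\pi)$ is unconstrained, which is exactly why the right-hand side of the theorem sums $|\G(n;0,j)|$ over all endpoints $(0,j)$ on the $y$-axis. It remains to define the decoration, i.e.\ to decide which ascents become $\nearrow$ and which descents become $\swarrow$. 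I will do this using the structure forced by $213$-avoidance: writing $\pi=\alpha\,1\,\beta$, every entry of $\alpha$ exceeds every entry of $\beta$ (otherwise an entry of $\alpha$, the letter $1$, and a larger entry of $\beta$ would form a $213$ pattern), and both $\alpha$ and $\beta$ are again $213$-avoiding. Iterating this decomposition exhibits $\pi$ as a system of nested blocks, and a canonical pairing of certain ascents with later descents arises from it; I mark the opening ascent of each such pair as $\nearrow$ and its closing descent as $\swarrow$, so that the $x$-coordinate records a nesting depth in this block structure.

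The key step — and the main obstacle — is the equivalence that the induced $x$-coordinate stays nonnegative throughout and returns to $0$ at the end \emph{if and only if} $\pi$ avoids $213$. One inclusion should follow by showing that an excursion of $x$ below $0$, or a failure to close all blocks, forces an occurrence of the pattern $213$; the reverse inclusion requires checking that the block decomposition above is compatible with the ballot height, so that the two constraints $x\ge0$ and $y\ge0$ can be imposed simultaneously. To finish, I will construct the inverse map: given a Gessel walk ending on the $y$-axis, the $y$-profile prescribes the ascent/descent pattern of $\pi$ and the $x$-marks prescribe the nesting of blocks, from which the underlying values of $\pi$ are recovered uniquely, with $213$-avoidance guaranteeing that the reconstruction is forced at each stage. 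I expect the delicate point throughout to be the interaction between the two positivity constraints — the ballot condition on $y$ (a condition on the descent pattern of $\pi$) and the quarter-plane condition on $x$ (a condition on the value structure of $\pi$) — since reconciling them is precisely what makes the appearance of Gessel's notoriously rigid step set natural here. Should a fully bijective proof of this equivalence prove too stubborn, a fallback is to translate the block decomposition into a functional equation for $\sum_{\pi}t^{|\pi|}u^{\asc(\pi)-\des(\pi)}$ with $u$ a catalytic variable and match it against the kernel-method functional equation governing $\G$.
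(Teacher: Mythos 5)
Your opening reduction is fine: an $n$-step Gessel walk ending on the $y$-axis is exactly a ballot path in the $y$-coordinate together with a set of marked up-steps ($\nearrow$) and marked down-steps ($\swarrow$) whose partial sums (the $x$-coordinate) stay nonnegative and end at $0$; and the structural fact that $\pi=\alpha\,1\,\beta$ with every entry of $\alpha$ exceeding every entry of $\beta$ is correct. But the proof has a genuine gap exactly where you flag it: the marking rule is never actually defined, and the claimed equivalence ``$x$ stays nonnegative and returns to $0$ if and only if $\pi$ avoids $213$'' is asserted, not proved. ``A canonical pairing of certain ascents with later descents arises from it'' is not a construction. Already for $n=2$ the two $213$-avoiding ballot permutations with profile up--down, namely $132$ and $231$, must be sent to the unmarked walk $\uparrow\downarrow$ and the marked walk $\nearrow\swarrow$ in some order, and under the iterated min-decomposition both contain a descent closing a block opened by an earlier ascent ($132=1\cdot 32$ nests the block $\{3\}$ just as $231=23\cdot 1$ does), so the naive ``nesting depth'' marking does not separate them. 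Worse, the decomposition you chose --- at the position of the letter $1$ --- is precisely the one the paper singles out as intractable on the walk side: it yields the recursion $E_n(h,b)=E_{n-1}(h+1,b)+E_{n-1}(h,b+1)+\sum_{i,a}E_i(h,a+1)E_{n-2-i}(a+1,b)$, for which the authors state explicitly (second remark after Lemma~\ref{lema:213}) that they failed to find a parallel decomposition of Gessel walks. So your ``key step'' is not a verification to be filled in later; as far as is known it is an open bijective problem, consistent with the notorious resistance of Gessel walks to direct bijections.

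For comparison, the paper proves the stronger Theorem~\ref{ges:213}: it introduces $(h,b)$-ballot permutations (path starting at height $h$, ending at height $b$) and shows that $E_n(h,b)=|\B_{n+1}^{(h,b)}(213)|$ and $F_n(h,b)=|\G(n;h,b)|$ satisfy the identical recurrence, obtained on the permutation side by decomposing at the \emph{first} letter, $\pi=k\,\pi'\,\pi''$ with $\pi'$ on the letters above $k$ and $\pi''$ on those below, and on the walk side by the first return to the $y$-axis; Theorem~\ref{thm:213} follows by summing over $b$. Note also that this tells you how to repair your fallback: the generating function $\sum_\pi t^{|\pi|}u^{\asc(\pi)-\des(\pi)}$ carries only one catalytic variable (the ending height $b$), whereas any first-letter or first-return decomposition produces subblocks living at raised starting heights, so you need the second catalytic parameter $h$ as well --- at which point you are re-deriving the paper's two-parameter recursion rather than a one-variable kernel equation. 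If you replace the position-of-$1$ decomposition by the first-letter one and track $(h,b)$, your plan becomes the paper's proof, and the resulting \emph{recursive} bijection even matches ascents with $y$-up steps and descents with $y$-down steps positionally; a non-recursive marking rule of the kind you envision remains unknown.
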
 
Along the way to the proof of Theorem~\ref{thm:213}, we have been able to find a certain  class of $213$-avoiding ballot permutations that are in bijection with $\G(n;i,j)$. Consequently,  a new combinatorial interpretation for the integer sequence $\{g_n\}_{n\geq0}$ is discovered. A ballot permutation $\pi\in\S_{2n+1}$ is called a {\em Dyck permutation} if $\asc(\pi)=\des(\pi)=n$. 
Bidkhori and Sullivant~\cite{BS} showed that Dyck permutations are enumerated by the {\em Eulerian-Catalan numbers}.
A specialization of our result indicates that $213$-avoiding Dyck permutations of length $2n+1$ are counted by $g_n$. 

The rest of this paper is organized as follows. In Section~\ref{sec:2}, we introduce a technical bijection that forms the main ingredient in our construction of $\Psi$. Based on this bijection, we construct $\Psi$ and extend it to well-labelled positive paths in Section~\ref{sec:3}. Section~\ref{sec:4} is devoted to the enumeration of ballot permutations avoiding patterns of length $3$. Finally, in Section~\ref{sec:5} we conclude our paper with several problems for further research.

\section{A technical bijection}
\label{sec:2}

Our decomposition of ballot permutations is based on a crucial bijection that we introduce in this section. 

Let us begin with the introduction of two useful combinatorial objects. Let $A=\{a_1,a_2,\ldots,a_k\}$ be a set of positive integers such that $a_1<a_2<\cdots <a_k$. A word $w$ over the alphabet $A\cup\{\Box\}$ is called a {\em box-permutation} on $A$ if each letter in $A$ appears exactly once in $w$ and  
\begin{itemize}
\item if  $\Box$ does not appear in $w$, then $w=a_1a_2\cdots a_k$;
\item otherwise, the subword of $w$ formed by integers before the first (or after the last) box  has length at least one and is increasing, and the subword of $w$ formed by integers between two successive boxes has length at least two and is increasing. 
\end{itemize}
Roughly speaking, box-permutations on $A$ are some permutations of $A$ that are separated properly  into several increasing segments by boxes.  Denote by $\BP(A)$ the set of all box-permutations on $A$. For example, if $A=[3]$, then
$$
\BP(A)=\{123,1\Box23,2\Box13,3\Box12,12\Box3,13\Box2,23\Box1\}. 
$$
A cycle $C=(c_1c_2\cdots c_{2\ell+1})$ of length $2\ell+1$, $\ell\geq0$, with letters from $A\cup\{\Box\}$ is called an {\em odd box-cycle} (resp.~a {\em reverse odd box-cycle}) on $A$ if each letter in $A$ appears in $C$ at most once and 
\begin{itemize}
\item either $C=(a)$ for a single element $a\in A$,
\item or $\ell\geq1$ and $C$ contains at least one box and each maximal component of $C$ formed by only integers has length at least two and is increasing clockwise (resp.~counterclockwise).
\end{itemize}
For example, the cycle $(1356\Box279\Box)$ is an odd box-cycle, while $(96531\Box 72\Box)$ is a reverse odd box-cycle. Note that as an odd box-cycle, $(1356\Box279\Box)=(56\Box279\Box13)$, and its two maximal components  formed by only integers are $1356$ and $279$ (written in clockwise order). A group of cycles   is called a {\em cyclic box-permutation} on $A$ if
\begin{itemize} 
\item each cycle is either  an odd box-cycle or a reverse odd box-cycle on $A$,
\item and each element of $A$ appears once in exactly  one of the cycles. 
\end{itemize}
For example, the cycles $(16\Box279\Box)(53\Box)(4)(8)$ is a cyclic box-permutation on $[9]$. 
Denote by $\cBP(A)$ the set of all cyclic box-permutations on $A$. For example, if $A=[3]$, then
$$
\cBP(A)=\{(1)(2)(3),(1)(23\Box),(1)(32\Box),(2)(13\Box),(2)(31\Box),(3)(12\Box),(3)(21\Box)\}. 
$$
For a (cyclic) box-permutation $\pi$, the set of all the $2$-subsets $\{a,b\}$ such that $a$ and $b$ are the two neighbors of a certain box in $\pi$  is called the {\em box-neighbor-set} of $\pi$. For instance, the box-neighbor-set of the box-permutation $23\Box4679\Box18\Box5$ is $\{\{3,4\},\{1,9\},\{5,8\}\}$, while  the box-neighbor-set of $(16\Box279\Box)(53\Box)(4)(8)$ is $\{\{2,6\},\{1,9\},\{3,5\}\}$.

The reason to use box as a letter will become transparent in next section, where each box in words receives a Dyck permutation. 
The following bijection between $\BP(A)$ and $\cBP(A)$ forms the main ingredient for our construction of the aforementioned map $\Psi$. 
\begin{theorem}\label{thm:box}
For any finite set $A=\{a_1<a_2<\cdots<a_k\}$ of positive integers,
there exists a box-neighbor-set-preserving bijection $\psi$ from $\BP(A)$ to $\cBP(A)$.
\end{theorem}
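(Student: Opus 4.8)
The plan is to first strip Theorem~\ref{thm:box} of its ``set'' data by fixing the box-neighbour-set, and then to realise $\psi$ by a fundamental-transformation-type procedure that opens a system of cycles into a single line and closes it back up. First I would record the elementary structural fact that in a box-permutation consecutive boxes are always separated by an increasing run of length at least two, so no integer can be adjacent to two different boxes; the same holds inside odd and reverse odd box-cycles, since there too every maximal integer component has length at least two. Hence the box-neighbour-set $N$ of any element of $\BP(A)$ or of $\cBP(A)$ is a \emph{perfect matching} on the set $U=U(N)\subseteq A$ of integers that occur as a box-neighbour, and $W:=A\setminus U$ is the set of remaining ``free'' letters. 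Because $\psi$ must preserve $N$, it suffices to biject, for each such matching $N$, the fibre of box-permutations with box-neighbour-set $N$ with the fibre of cyclic box-permutations with the same $N$.

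The engine of the argument is a reformulation on the vertex set $U$. For a fixed $N$, the $m$ neighbour pairs are \emph{box-edges} and constitute $N$ itself. In a box-permutation, each interior increasing segment $S_i$ contributes one \emph{segment-edge} $\{\min S_i,\max S_i\}$; these edges are automatically increasing, they cover every vertex of $U$ except the two ends $\max S_0$ and $\min S_m$, and together with $N$ they assemble into a single Hamiltonian path of $U$ with these two distinguished endpoints. Dually, the components $C$ of a cyclic box-permutation give increasing \emph{component-edges} $\{\min C,\max C\}$ that, with $N$, assemble into a vertex-disjoint union of cycles, each carrying an orientation (odd versus reverse odd) and subject to the odd-length condition; a short parity computation rewrites that condition in the clean form that, \emph{in each box-cycle, the number of its boxes plus the number of free letters lying inside its components is odd}. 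The letters of $W$ are then slotted into interiors: a free letter lies strictly between the two endpoints of its segment or component, except that the two ends $\max S_0,\min S_m$ of the linear path may also absorb free letters on their outer side. In this language the theorem asks for a bijection between one increasing open chain with two marked ends (plus an interior filling) and a disjoint family of oriented increasing cycles together with singletons (plus an interior filling), over the common matching $N$.

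I would then build $\psi$ as the passage from the line to the system of cycles in the spirit of the first fundamental transformation invoked in the introduction: read the chain and close it into cycles at a canonically chosen set of break-points (for instance those governed by the left-to-right maxima of the segment maxima), use the two available orientations of a box-cycle to record on which side each closure is performed, and redistribute the free letters so that the two-sided interior constraints on the cyclic side and the one-sided constraints at the two free ends on the linear side are reconciled. The inverse map re-opens the cycles along the same canonical break-points, the orientations dictating how each cycle is unrolled, and the free letters are returned to the free ends.

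The hard part will be that this correspondence is not a free combinatorial reshuffling. The increasing requirement on segment- and component-edges means that not every path or cycle cover on $U$ is realisable, and, more seriously, the odd-length condition is entangled through the parity identity above with how many free letters sit inside a component; so the closure cannot act on $U$ and on $W$ independently. Indeed $\psi$ must genuinely move free letters between the ``interior'' and ``free-end/singleton'' roles: already for $A=[5]$ and $N=\{\{2,3\},\{1,5\}\}$ both fibres have size $8$, yet the letter $4$ is strictly interior in $2$ of the box-permutations and in $4$ of the cyclic box-permutations. Proving that the closure always outputs box-cycles of odd length, that its inverse always returns a legitimate box-permutation, and that these two maps are mutually inverse while the matching $N$ is preserved throughout, is where the real work lies; once well-definedness, the parity match, and invertibility are secured, the box-neighbour-preservation is immediate from the construction, and everything else is bookkeeping organised by $N$ and the chosen record structure.
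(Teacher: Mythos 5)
Your structural reduction is sound and worth keeping: the box-neighbour-set $N$ is indeed a perfect matching on the letters adjacent to boxes (since every maximal integer run between boxes has length at least two, no letter flanks two boxes); a box-permutation then amounts to an alternating path of box-edges and increasing segment-edges with two marked ends plus an interior filling by free letters, a cyclic box-permutation to an oriented cover by alternating cycles plus singletons; and your parity identity is correct, since a box-cycle with $m\geq 1$ boxes and $f$ free letters interior to its components has length $m+2m+f=3m+f$, which is odd exactly when $m+f$ is. Your $A=[5]$, $N=\{\{2,3\},\{1,5\}\}$ example also checks out (both fibres have size $8$, with the letter $4$ strictly interior in $2$ box-permutations and in $4$ cyclic ones), and it correctly demonstrates that no closure rule acting on $U$ alone can work. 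But the proposal stops precisely where the theorem's content begins: the map $\psi$ is never defined. ``Break-points governed by the left-to-right maxima of the segment maxima'' is floated as a possibility, with no rule for assigning orientations, no mechanism for moving free letters between interior and free-end/singleton roles, and no argument that the closure produces only odd-length box-cycles or that it can be inverted --- and you say yourself that all of this ``is where the real work lies.'' That is the missing idea, not bookkeeping: the entire difficulty of Theorem~\ref{thm:box} is to couple the closure on $U$ with the transfer of letters of $W$ so that the constraint that $m+f$ be odd holds cycle by cycle, and the proposal offers no device that achieves this.

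For comparison, the paper resolves exactly this difficulty by recursing on the minimum letter $a_1$ instead of closing at global break-points. It partitions $\BP(A)$ into $\BP^{(1)}(A)$, $\BP^{(2)}(A)$, $\BP^{(3)}(A)$ according to the position of $a_1$: if $\pi\in\BP^{(2)}(A)$ the fixed point $(a_1)$ is peeled off; if $\pi\in\BP^{(3)}(A)$ then $a_1$ stands at an odd position $2\ell+1$, so the prefix through $a_1$ is factored out as an odd box-cycle of length $2\ell+1$ --- oddness is automatic from the position of $a_1$, with no global parity analysis needed; and a four-case surgery lemma supplies a box-neighbour-set-preserving bijection $\phi:\BP^{(1)}(A)\to\BP^{(3)}(A)$ handling the remaining case, the choice ``$\phi$ applied or not'' being recorded as the orientation of the factored cycle (odd versus reverse odd box-cycle). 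This is exactly the role you wanted orientations to play, but implemented by an explicit rule whose five-way case analysis on $(\mathsf{F},\mathsf{P},\mathsf{N})$-type data certifies invertibility. To complete your route you would need an analogue of that lemma: a concrete closure-and-redistribution rule, a proof that each output cycle satisfies the parity condition, and a case analysis showing the rule is reversible; until then the proposal is a correct reformulation together with a plausible strategy, not a proof.
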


A crucial lemma is needed before we proceed to prove Theorem~\ref{thm:box}. For any $A=\{a_1<a_2<\cdots<a_k\}$, let us divide $\BP(A)$ into three disjoint subsets as follows:
\begin{itemize}
\item $\BP^{(1)}(A)$ is the set of box-permutations $\pi\in\BP(A)$ such that either $a_1$ appears in an even position of $\pi$, or $\pi_1=a_1$ and $\pi_2=\Box$;
\item $\BP^{(2)}(A):=\{\pi\in\BP(A): \pi_1=a_1\text{ and  $\pi_2\neq\Box$}\}$; 
\item $\BP^{(3)}(A):=\BP(A)\setminus(\BP^{(1)}(A)\cup\BP^{(2)}(A))$, consisting of box-permutations $\pi\in\BP(A)$ such that  $a_1$ appears in an odd position, other than the first position, of $\pi$.
\end{itemize}
For example, if $A=[3]$, then 
$$
\BP^{(1)}(A)=\{1\Box23,23\Box1\}, \BP^{(2)}(A)=\{123,12\Box3,13\Box2\} \text{ and }\BP^{(3)}(A)=\{2\Box13,3\Box12\}.
$$

\begin{lemma}
There exists a box-neighbor-set-preserving bijection $\phi$ between $\BP^{(1)}(A)$ and $\BP^{(3)}(A)$.
\end{lemma}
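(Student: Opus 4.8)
The plan is to exploit the fact that $a_1=\min A$ is forced to be the first (smallest) letter of whichever increasing segment contains it; consequently $a_1$ always sits immediately to the left or to the right of a box, and the three classes are governed entirely by the position of $a_1$ together with the side of the adjacent box on which it lies. First I would record this normal form by rephrasing a box-permutation as an ordered set partition $S_0\mid S_1\mid\cdots\mid S_m$ of $A$ into increasing blocks, with $|S_0|,|S_m|\ge 1$ and $|S_i|\ge 2$ for $0<i<m$. Under this dictionary the box-neighbor-set becomes the set of unordered pairs $\{\max S_{i-1},\min S_i\}$, and when $a_1$ heads the block $S_j$ its position equals $\sum_{i<j}|S_i|+j+1$, so that $\BP^{(1)}(A)$, $\BP^{(2)}(A)$, $\BP^{(3)}(A)$ are read off from the parity of this quantity and from whether $j=0$.

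With this reformulation, being box-neighbor-set-preserving is exactly the same as preserving the fibers of the map sending a box-permutation to its collection of consecutive $(\max,\min)$ pairs; hence constructing $\phi$ amounts to producing, inside each fiber, a bijection from its $\BP^{(1)}(A)$-members to its $\BP^{(3)}(A)$-members. I would build such a bijection by relocating a single letter so as to flip the parity of the position of $a_1$ while leaving every pair $\{\max S_{i-1},\min S_i\}$ untouched. The guiding principle is that inserting or deleting one letter strictly before $a_1$ changes its position by exactly one and therefore toggles the parity; moreover such a move disturbs no $(\max,\min)$ pair provided the transferred letter is $\min S_0$ (whose removal affects no pair, since the first block has no box to its left) and is reinserted into an extreme block without becoming that block's new minimum. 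The letter $a_1$ itself is ideally suited for the bookkeeping, because any box adjacent to $a_1$ carries the pair $\{\ast,a_1\}$, which survives automatically as long as $a_1$ remains at the head of its block.

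Concretely, I would split $\BP^{(1)}(A)$ into its two defining types. When $a_1$ occupies position $1$ and is followed by a box, so $S_0=\{a_1\}$, I transpose $a_1$ with $\min S_1$ across the first box; this fixes the pair $\{a_1,\min S_1\}$ together with all later boxes, preserves parity, and maps these permutations exactly onto the $\BP^{(3)}(A)$-members having $a_1$ in position $3$. When $a_1$ lies in an even position it heads some block $S_j$ with $j\ge 1$, and I transfer one letter between $S_0$ and a suitable extreme block (the block containing $a_1$, or the last block) in the direction dictated by the length constraints, so that the parity of $a_1$ flips and no $(\max,\min)$ pair moves; the inverse runs the same transfer backwards. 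It then remains to check that these partial maps assemble into one well-defined involution-type bijection and that box-neighbor-sets are preserved throughout.

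The main obstacle is precisely the boundary behaviour of the even-position type: the most naive move, deleting $\min S_0$ and reinserting it just after $a_1$, is legal only when $|S_0|\ge 2$ and when the receiving block is the last one, for otherwise it empties or shortens a block below its permitted length, or it alters the minimum of the block it enters and hence a box-neighbor pair. I expect the real work to lie in specifying, for each configuration of block lengths around $a_1$ (in particular when $|S_0|=1$, where the transfer must reverse direction and add a letter \emph{into} $S_0$), exactly which letter is moved and in which sense, and in confirming that the resulting recipe is genuinely reversible. A clean way to discharge this is to argue fiber by fiber, showing that within each fiber the relocation is forced and invertible and therefore matches the $\BP^{(1)}(A)$- and $\BP^{(3)}(A)$-members; alternatively one may set up the pairing by a short induction on the number of boxes, peeling off the leading block $S_0\Box$ and recursing on $A\setminus S_0$ while tracking how the parity of the position of $a_1$ shifts by $|S_0|+1$ through the recursion.
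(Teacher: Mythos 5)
Your setup (ordered increasing blocks $S_0\mid S_1\mid\cdots\mid S_m$, box-neighbor-set as the pairs $\{\max S_{i-1},\min S_i\}$, parity of the position of $a_1$) is a faithful reformulation of the paper's framework, but the one concrete map you do specify is wrong, and the part you defer is the entire content of the lemma. For the first type you claim that transposing $a_1$ with $\min S_1$ maps the permutations with $S_0=\{a_1\}$ \emph{exactly onto} the members of $\BP^{(3)}(A)$ having $a_1$ in position $3$. This fails already for $A=[3]$: both $2\,\Box\,1\,3$ and $3\,\Box\,1\,2$ lie in $\BP^{(3)}([3])$ with $a_1=1$ in position $3$, but the would-be preimage of $3\,\Box\,1\,2$ under your transposition is $1\,\Box\,3\,2$, which is not a box-permutation (the block after the box is not increasing). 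In the paper $3\,\Box\,1\,2$ is instead the image of the even-position element $2\,3\,\Box\,1$ under a different move. So the position of $a_1$ alone cannot separate the images of your two types; the paper's inverse needs the finer invariants $\mathsf{F}(\pi')$ (first letter), $\mathsf{N}(\pi')$ (right neighbor of $a_1$), whether $\pi'_2=\Box$, and whether $\mathsf{N}(\pi')$ is the left neighbor of a box, yielding a five-way classification of $\BP^{(3)}(A)$ that your assembly plan does not recover.

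For the even-position type you only gesture at ``transferring one letter between $S_0$ and a suitable extreme block,'' and you correctly anticipate the obstruction (reinserting $\min S_0$ after $a_1$ breaks the increasing condition unless the moved letter is smaller than the letter currently following $a_1$, and the case $|S_0|=1$ reverses direction), but you never resolve it --- and resolving it is the lemma. The paper's solution is not a single-letter transfer in general: when the minimum $t$ of the free letters $T$ satisfies $t<\min\{a,x\}$ (with $a$ the left neighbor of the first box and $x$ the left neighbor of the first box right of $a_1$) it flips $t$ between the front and the slot after $a_1$, and otherwise it transports a three-letter subword ($x\,\Box\,\pi_{k+2}$ to the front, or $a\,\Box\,\pi_3$ to just after $a_1$) wholesale, which is what makes the map well defined and invertible in all configurations. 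Your two fallback strategies do not repair this: fiber-by-fiber ``forcedness'' is unsubstantiated (to argue fiberwise you would need to prove that each box-neighbor-set fiber contains equally many $\BP^{(1)}$- and $\BP^{(3)}$-elements, which you never do), and the induction peeling off $S_0\,\Box$ is incompatible with preserving box-neighbor-sets, because a box-neighbor-set-preserving bijection on the peeled word may change the minimum of its first block (the paper's $\phi$ typically changes the first letter), which alters the pair at the reattached box. As it stands, the proposal identifies the right invariants but contains a false surjectivity claim and omits the case analysis that constitutes the proof.
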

\begin{proof}
For any box-permutation $\pi\in\BP(A)$, denote by $\mathsf{F}(\pi)=\pi_1$ the first letter of $\pi$, by $\mathsf{P}(\pi)$ the position of $a_1$ and by $\mathsf{N}(\pi)$ the right neighbor (if any) of $a_1$  in $\pi$. These three indices are useful in considering the inverse of $\phi$. 
Now suppose $\pi=\pi_1\pi_2\cdots\pi_n\in\BP^{(1)}(A)$ and we aim to construct $\phi(\pi)\in\BP^{(3)}(A)$ according to the following four cases.

{\bf Case I: $\pi_1=a_1$ and $\pi_2=\Box$}. In this case, 
$$\pi=\blue{a_1\Box\pi_3}\pi_4\cdots\pi_n \text{ and define } \phi(\pi)=\blue{\pi_3\Box a_1}\pi_4\pi_5\cdots\pi_n,
$$ the permutation obtained from $\pi$ by switching the first and the third letters. Now for $\phi(\pi)=\pi'$, we have $\mathsf{P}(\pi')=3$, $\pi'_2=\Box$ and either $n=3$ or $\mathsf{F}(\pi')<\mathsf{N}(\pi')$. 

{\bf Case II: $\pi_1\neq a_1$}. In this case, $a_1$ must be the right neighbor of a box in $\pi$.  Let $x$ be the left neighbor of the first box, if any, to the right of $a_1$. Otherwise, no box appears to the right of $a_1$ and we set $x=+\infty$.  Let $a$ be the left neighbor of the first box in $\pi$. Suppose that $\pi_i=a$, $\pi_j=a_1$ and $\pi_k=x$ (whenever $x=+\infty$, set $k=n+1$) for some $i<j<k$. 
Let us consider the set of letters 
$$
T:=\{\pi_{\ell}: 1\leq \ell<i\text{ or }j<\ell<k\}
$$
and further   distinguish three subcases.
\begin{enumerate}
\item $T\neq\emptyset$ and $t=\min(T)<\min\{a,x\}$. In this case, either $t$ is the first letter of $\pi$ or $t$ is the letter immediately after $a_1$. We can flip $t$ back and forth between ``the first letter of $\pi$'' and ``the letter immediately after $a_1$'' to get $\phi(\pi)$. To be more precise, if $t=\pi_1$ is the first letter of $\pi$, then 
$$
\pi=\blue{t}\pi_2\cdots a\Box\cdots\Box a_1\cdots
$$
and define 
$$
\phi(\pi)=\pi_2\cdots a\Box\cdots\Box a_1\blue{t}\cdots.
$$
Otherwise, $t=\pi_{j+1}$ and $\pi$ can be written as 
$$
\pi=\pi_1\cdots a\Box\cdots\Box a_1\blue{t}\cdots,
$$
then we define 
$$
\phi(\pi)=\blue{t}\pi_1\cdots a\Box\cdots\Box a_1\cdots.
$$
For  the former $\phi(\pi)=\pi'$, we have $\mathsf{P}(\pi')\geq3$, $\mathsf{F}(\pi')>\mathsf{N}(\pi')$ and $\mathsf{N}(\pi')$ is not the left neighbor of a box, while for the later $\phi(\pi)=\pi'$, we have $\mathsf{P}(\pi')\geq5$, $\pi'_2\neq\Box$ and either $n=\mathsf{P}(\pi')$ or $\mathsf{F}(\pi')<\mathsf{N}(\pi')$. 

\item $x<\pi_1<\cdots<\pi_i=a$ and $k=j+1$. In this case, $\pi$ has the form
$$
\pi=\pi_1\cdots a\Box\cdots \Box a_1\blue{x\Box\pi_{k+2}}\cdots,
$$
then define 
$$
\phi(\pi)=\blue{\pi_{k+2}\Box x}\pi_1\cdots a\Box\cdots \Box a_1\cdots, 
$$
i.e., delete the subword $x\Box\pi_{k+2}$ from $\pi$  and insert $\pi_{k+2}\Box x$ at the beginning. Now for $\phi(\pi)=\pi'$, we have $\mathsf{P}(\pi')\geq7$, $\pi'_2=\Box$ and either $\mathsf{P}(\pi')=n$ or $\mathsf{F}(\pi')<\mathsf{N}(\pi')$. 
\item $\pi_1=a$ and $a<\pi_{j+1}<\cdots <\pi_k=x$ (possibly $j=n$). In this case, $\pi$ has the form
$$
\pi=\blue{a\Box\pi_3}\pi_4\cdots \Box a_1\pi_{j+1}\cdots x\Box\cdots,
$$
then define 
$$
\phi(\pi)=\pi_4\cdots \Box a_1\blue{\pi_3\Box a}\pi_{j+1}\cdots x\Box\cdots,
$$
i.e., delete the subword $a\Box\pi_3$ from $\pi$ and then insert $\pi_3\Box a$ immediately after $a_1$. Now for $\phi(\pi)=\pi'$, we have $\mathsf{P}(\pi')\geq3$, $\mathsf{F}(\pi')>\mathsf{N}(\pi')$ and $\mathsf{N}(\pi')$ is the left neighbor of a box. 
\end{enumerate}

It is clear from the above construction that the map $\phi:\BP^{(1)}(A)\rightarrow\BP^{(3)}(A)$ preserves the box-neighbor-set statistic. We observe that each of the four cases in the construction of  $\phi$ is reversible. Given any 
$\pi'=\pi'_1\pi'_2\cdots\pi'_n\in\BP^{(3)}(A)$, we consider  the following four cases:
\begin{itemize}
\item If $\mathsf{P}(\pi')=3$, $\pi'_2=\Box$ and either $n=3$ or $\mathsf{F}(\pi')<\mathsf{N}(\pi')$, then we can apply the reverse operation in {\bf Case~I} to get $\phi^{-1}(\pi')$. 
\item If $\mathsf{P}(\pi')\geq5$, $\pi'_2\neq\Box$ and either $\mathsf{P}(\pi')=n$ or $\mathsf{F}(\pi')<\mathsf{N}(\pi')$, then we can apply the reverse operation in the second situation of {\bf Case~II}(1) to get $\phi^{-1}(\pi')$.
\item If $\mathsf{P}(\pi')\geq7$, $\pi'_2=\Box$ and either $\mathsf{P}(\pi')=n$ or $\mathsf{F}(\pi')<\mathsf{N}(\pi')$, then we can apply the reverse operation in {\bf Case~II}(2) to get $\phi^{-1}(\pi')$.
\item If $\mathsf{P}(\pi')\geq3$, $\mathsf{F}(\pi')>\mathsf{N}(\pi')$ and $\mathsf{N}(\pi')$ is not the left neighbor of a box, then we can apply the reverse operation in the first situation of {\bf Case~II}(1) to get $\phi^{-1}(\pi')$.
\item If $\mathsf{P}(\pi')\geq3$, $\mathsf{F}(\pi')>\mathsf{N}(\pi')$ and $\mathsf{N}(\pi')$ is the left neighbor of any box,  then we can apply the reverse operation in {\bf Case~II}(3) to get $\phi^{-1}(\pi')$.
\end{itemize}
Since the above five cases are disjoint and exhaust all possible $\pi'\in\BP^{(3)}(A)$, the map $\phi$ is a bijection.
\end{proof}

Now we are in a position to prove Theorem~\ref{thm:box}. 

\begin{proof}[{\bf Proof of Theorem~\ref{thm:box}}] For any box-permutation $\pi=\pi_1\pi_2\cdots\pi_n\in\BP(A)$, we construct $\psi(\pi)\in\cBP(A)$ by factoring out odd cycles step by step according to the following three cases:
\begin{enumerate}
\item If $\pi\in\BP^{(2)}(A)$, then form the cycle $C_1=(a_1)$ and the box-permutation $\tilde\pi=\pi_2\pi_3\cdots\pi_n\in\BP(A\setminus\{a_1\})$. Define 
$$
\psi(\pi)=\{C_1,\psi(\tilde\pi)\}.
$$
\item  If $\pi\in\BP^{(3)}(A)$, then the letter $a_1$ appears in position $2\ell+1$ of $\pi$ for some $\ell\geq1$. Form the \blue{odd box-cycle} $C_1=(a_1\pi_{1}\pi_2\cdots\pi_{2\ell})$ and the box-permutation $\tilde\pi=\pi_{2\ell+2}\pi_{2\ell+3}\cdots\pi_n\in\BP(\tilde A)$, where $\tilde{A}=A\setminus\{\pi_i:1\leq i\leq {2\ell+1}\text{ and } \pi_i\neq\Box\}$. Define 
$$
\psi(\pi)=\{C_1,\psi(\tilde\pi)\}.
$$
\item If $\pi\in\BP^{(1)}(A)$, then $\pi'=\phi(\pi)\in \BP^{(3)}(A)$. Now the letter $a_1$ appears in position $2\ell+1$ of $\pi'$ for some $\ell\geq1$. Form the \blue{reverse odd box-cycle} $C_1=(a_1\pi'_{2\ell}\pi'_{2\ell-1}\cdots\pi'_{1})$ and the box-permutation $\tilde\pi'=\pi'_{2\ell+2}\pi'_{2\ell+3}\cdots\pi'_n\in\BP(\tilde A)$, where $\tilde{A}=A\setminus\{\pi'_i:1\leq i\leq {2\ell+1}\text{ and } \pi'_i\neq\Box\}$. Define 
$$
\psi(\pi)=\{C_1,\psi(\tilde\pi')\}.
$$
\end{enumerate}

It is clear from the above construction that $\psi: \BP(A)\rightarrow\cBP(A)$ is a bijection as $\phi:\BP^{(1)}(A)\rightarrow\BP^{(3)}(A)$ does. 
\end{proof}

\begin{example}Let $A=\{2,4,5,6,7,8,9,11,14,16,17,18\}$. Consider the box-permutation
$$
\pi=\blue{4}\,6\,\Box\,2\,7\,16\,\Box\,9\,11\,14\,\Box\,5\,18\,\Box\,8\,17\in\BP(A). 
$$
Applying the operation in {\bf Case II}(1) to $\pi$ we get 
$$\phi(\pi)=\blue{6\,\Box\,2}\,4\,7\,16\,\Box\,9\,11\,14\,\Box\,5\,18\,\Box\,8\,17$$
and so 
\begin{equation}\label{exm:1}
\psi(\pi)=\{(6,2,\Box),\psi(\blue{4}\,7\,16\,\Box\,9\,11\,14\,\Box\,5\,18\,\Box\,8\,17)\}=\{(6,2,\Box),(4),\psi(\pi')\},
\end{equation}
where $\pi'=\blue{7}\,16\,\Box\,9\,11\,14\,\Box\,5\,18\,\Box\,8\,17$. 
Applying the operation in {\bf Case II}(1) to $\pi'$  we get
$
\phi(\pi')=\blue{16\,\Box\,9\,11\,14\,\Box\,5}\,7\,18\,\Box\,8\,17
$
and so 
\begin{align*}
\psi(\pi')&=\{(16,5,\Box,14,11,9,\Box),\psi(7\,18\,\Box\,8\,17)\}\\
&=\{(16,5,\Box,14,11,9,\Box),(7),(8,18,\Box),(17)\}.
\end{align*}
Thus,  by~\eqref{exm:1} we have 
\begin{equation}\label{exm:box}
\psi(\pi)=\{(6,2,\Box),(4),(16,5,\Box,14,11,9,\Box),(7),(8,18,\Box),(17)\}.
\end{equation}

Consider another box-permutation
$$
\pi=\blue{2\,\Box\,8}\,9\,\Box\,7\,14\,17\,18\,\Box\,4\,11\,\Box\,5\,6\,\Box\,16\in\BP(A). 
$$
Applying the operation in {\bf Case I} to $\pi$ we get 
$$\phi(\pi)=\blue{8\,\Box\,2}\,9\,\Box\,7\,14\,17\,18\,\Box\,4\,11\,\Box\,5\,6\,\Box\,16$$
and so 
\begin{equation}\label{exm:2}
\psi(\pi)=\{(8,2,\Box),\psi(\pi')\},
\end{equation}
where $\pi'=\blue{9\,\Box\,7}\,14\,17\,18\,\Box\,4\,11\,\Box\,5\,6\,\Box\,16$. Applying the operation in {\bf Case II}(3) to $\pi'$ we get 
$$
\phi(\pi')=14\,17\,18\,\Box\,4\,7\,\Box\,9\,11\,\Box\,5\,6\,\Box\,16
$$
and so $\psi(\pi')=\{(18,17,14,4,\Box),\psi(\pi'')\}$, where $\pi''=7\,\Box\,9\,11\,\Box\,5\,6\,\Box\,16$. Applying the operation in {\bf Case II}(2) to $\pi''$ we get 
$$
\phi(\pi'')=16\,\Box\,6\,7\,\Box\,9\,11\,\Box\,5
$$ and so $\psi(\pi'')=(16,5,\Box,11,9,\Box,7,6,\Box)$. Therefore, by~\eqref{exm:2} we have  
\begin{equation}\label{exm2:box}
\psi(\pi)=\{(8,2,\Box),(18,17,14,4,\Box),(16,5,\Box,11,9,\Box,7,6,\Box)\}.
\end{equation}
\end{example}

\section{ The construction of $\Psi$ and extension}%
\label{sec:3}

This section is devoted to the construction of our main bijection $\Psi$ stated in Theorem~\ref{thm:main}.

\begin{figure}
\begin{tikzpicture}[scale=0.55, axes/.style={black}, thick, shape=circle, inner sep=0.5mm]
\draw[help lines] (0,0) grid (20.5, 8.5);
\path
(0,0)  node[draw=blue, label=left: \textcolor{blue}{4}](4){}
(1,1)  node[draw=blue, label=above:\textcolor{blue}{6}](6){}
(7,1)  node[draw=blue, label=above:\textcolor{blue}{2}](2){}
(8,2)  node[draw=blue, label=above:\textcolor{blue}{7}](7){}
(9,3)  node[draw=blue, label=above:\textcolor{blue}{16}](16){}
(11,3) node[draw=blue, label=above:\textcolor{blue}{9}](9){}
(12,4) node[draw=blue, label=above:\textcolor{blue}{11}](11){}
(13,5) node[draw=blue, label=above:\textcolor{blue}{14}](14){}
(15,5) node[draw=blue, label=above:\textcolor{blue}{5}](5){}
(16,6) node[draw=blue, label=above:\textcolor{blue}{18}](18){}
(18,6) node[draw=blue, label=above:\textcolor{blue}{8}](8){}
(19,7) node[draw=blue, label=above:\textcolor{blue}{17}](17){};
\path
(2,2)  node[draw=red, label=above:\textcolor{red}{10}](10){}
(3,3)  node[draw=red, label=above:\textcolor{red}{13}](13){}
(4,2)  node[draw=red, label=above:\textcolor{red}{12}](12){}
(5,1)  node[draw=red, label=above:\textcolor{red}{1}](1){}
(6,2)  node[draw=red, label=above:\textcolor{red}{3}](3){}
(10,4) node[draw=red, label=above:\textcolor{red}{20}](20){}
(14,6) node[draw=red, label=above:\textcolor{red}{15}](15){}
(17,7) node[draw=red, label=above:\textcolor{red}{19}](19){};
\draw[very thick, blue] 
(4) -- (6)
(2) -- (7) -- (16)
(9) -- (11) -- (14)
(5) -- (18)
(8) -- (17);
\draw[very thick, red]  
(6) -- (10) -- (13) -- (12) -- (1) -- (3) -- (2)
(16) -- (20) -- (9)
(14) -- (15) -- (5)
(18) -- (19) -- (8);
\begin{scope}[axes]
\draw[->] (4) -- (21, 0) node[below=2pt] {$x$} coordinate(x axis);
\draw[->] (4) -- (0, 9)  node[left=2pt]  {$y$} coordinate(y axis);
\end{scope}
\end{tikzpicture}
\caption{The ballot path  associated with the ballot permutation  $\pi=4\,6\,10\,13\,12\,1\,3\,2\,7\,16\,20\,9\,11\,14\,15\,5\,18\,19\,8\,17\in\BP_{20}$.
\label{ballot:paths}}
\end{figure}
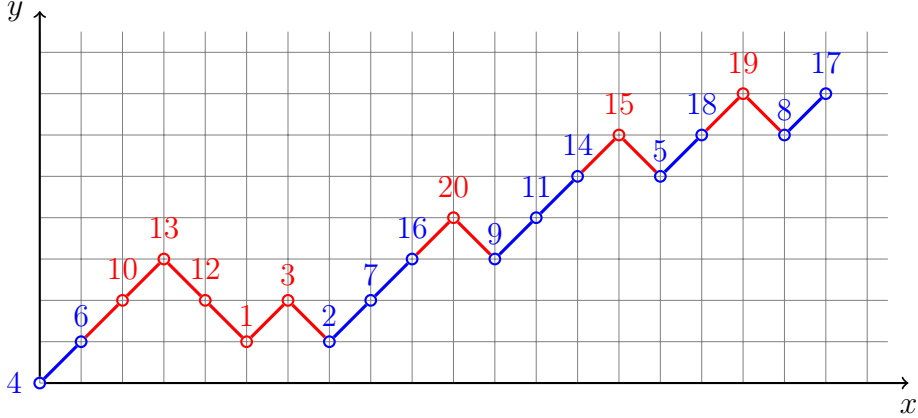

A lattice path confined to the quarter plane $\N^2$ using steps from the set $\{\nearrow,\searrow\}$ is called a {\em ballot path}. A {\em Dyck path} is a ballot path that begins with $(0,0)$ and ends at the $x$-axis. Every ballot permutation $\pi\in\B_n$ can be associated with a ballot path $L(\pi)$ starting at $(0,0)$ whose $i$th step is  $U=(1,1)$ or $D=(1,-1)$ according to 
$\pi_i<\pi_{i+1}$ or $\pi_i>\pi_{i+1}$. The $i$th lattice point of such a path will be labeled by the $i$th letter $\pi_i$ of $\pi$. See Fig.~\ref{ballot:paths} for an example of a labeled ballot path associated with a ballot permutation.  The letter $\pi_i$ (or the $i$th lattice point of $L(\pi)$) is said to {\em has height $h$} if
$$
\asc(\pi_1\pi_2\cdots\pi_i)-\des(\pi_1\pi_2\cdots\pi_i)=h.
$$
Clearly, a ballot permutation is a Dyck permutation if its associated ballot path is a Dyck path, i.e., the height of the last letter has height zero. A letter $\pi_i$ is called an {\em ascent bottom} (resp.~ a {\em descent bottom}) of $\pi$ if $\pi_i<\pi_{i+1}$ (resp.~$\pi_{i-1}>\pi_i$). 

{\bf The algorithm $\Psi$.} Given a permutation $\pi\in\B_n$, we perform the following three steps to get $\Psi(\pi)\in\Od_n$:
\begin{enumerate}
\item Firstly, we  decompose $\pi$ into a pair $(D,\sigma)$, where $D$ is the set of all maximal consecutive subwords of $\pi$ that are Dyck permutations of size at least $3$ and $\sigma$ is the box-permutation obtained from $\pi$ by replacing each maximal Dyck permutation (retaining its two boundary letters)  in $D$ by a box. We could find out all the maximal Dyck permutations (of size at least $3$) like this:
\begin{itemize}
\item[(i)] find the right-most descent bottom with the minimum height in $\pi$, say $\pi_i$ whose height is $h_1$; find the left-most ascent bottom with height $h_1$, say $\pi_{i'}$ with $i'<i-1$; then the interval $\pi_{i'}\pi_{i'+1}\cdots\pi_i$ forms the first maximal Dyck permutation;
\item[(ii)] if the suffix $\bar\pi=\pi_{i+1}\pi_{i+2}\cdots\pi_n$ is increasing, then stop finding; otherwise, continue to extract the next maximal Dyck permutation as (i) with $\pi$ replaced by  $\bar\pi$. 
\end{itemize}

 For the example of  ballot permutation in Fig.~\ref{ballot:paths}, $D$ contains four Dyck permutations which are painting in red and $\sigma=\blue{4\,6}\,\Box\,\blue{2\,7\,16}\,\Box\,\blue{9\,11\,14}\,\Box\,\blue{5\,18}\,\Box\blue{\,8\,17}$ is a box-permutation. 
\item Next, using the map $\psi$ in Theorem~\ref{thm:box} to form the cyclic box-permutation $\psi(\sigma)$. 
Continuing with the example, we get the cyclic box-permutation
$$
\psi(\sigma)=(6,2,\Box)(4)(16,5,\Box,14,11,9,\Box)(7)(8,18,\Box)(17)
$$
 from~\eqref{exm:box}. 
 \item Finally, inserting each Dyck permutation in $D$ back to its corresponding box in $\psi(\sigma)$ {\em properly} to form  $\Psi(\pi)$. Here proper means that for each Dyck permutation $d=d_1d_2\cdots d_k$ from $D$, 
 \begin{itemize}
 \item if the odd box-cycle  in $\psi(\sigma)$ containing the letters $d_1$ and $d_k$ has the form $(\ldots,d_1,\Box,d_k,\ldots)$, then the resulting odd cycle is $(\ldots,d_1,d_2,\ldots,d_k,\ldots)$;
 \item otherwise, the odd box-cycle  in $\psi(\sigma)$ containing the letters $d_1$ and $d_k$ has the form $(\ldots,d_k,\Box,d_1,\ldots)$, and the resulting odd cycle is $(\ldots,d_k,d_{k-1},\ldots,d_1,\ldots)$.
 \end{itemize}
 Since $\psi$ is proved in  Theorem~\ref{thm:box} to be box-neighbor-set-preserving, such inserting operation is possible. 
 
 Continuing with the above example, after inserting all the four Dyck permutations back to $\psi(\sigma)$, we get the odd order permutation
 $$
 \Psi(\pi)=(6,2,\red{3,1,12,13,10})(4)(16,5,\red{15},14,11,9,\red{20})(7)(8,18,\red{19})(17). 
 $$
 \end{enumerate}

 It is clear from the above construction that $\Psi$ is box-neighbor-set-preserving. To see that $\Psi$ is bijective, we aim to construct its inverse $\Psi^{-1}$ explicitly. 
 
 {\bf The algorithm $\Psi^{-1}$.} Suppose that  $\pi\in\Od_n$ and $\pi=C_1C_2\cdots C_{\ell}$, where each $C_i$ is an odd cycle. We perform the following three steps to get $\Psi^{-1}(\pi)\in\B_n$:
 \begin{enumerate}
 \item[(I)] Firstly, we  decompose $\pi$ into a pair $(D,\sigma)$, where 
\begin{itemize}
\item $D$ is the set of all maximal consecutive subwords of all cycles of $\pi$ that are Dyck permutations of size at least $3$;
\item and $\sigma$ is the cyclic box-permutation formed by all cyclic box-permutations obtained from  all odd cycles of $\pi$ by replacing each maximal Dyck permutation (retaining its two boundary letters)  in $D$ by a box.
\end{itemize}  
For each odd cycle $C=(c_1c_2\cdots c_k)$ of $\pi$ with length $k\geq3$ and $c_1$ is any cyclic peak of $C$ (i.e.~$c_k<c_1>c_2$),  we could find out the maximal Dyck permutation that contains $c_1$ in $C$ like this: 
\begin{itemize}
\item For each $2\leq i\leq k$, define 
$$h_i^+=\asc(c_1c_2\cdots c_i)-\des(c_1c_2\cdots c_i)$$
and
$$h_i^-=\des(c_1c_{k}c_{k-1}\cdots c_{i})-\asc(c_1c_{k}c_{k-1}\cdots c_{i}).$$ 
\item We need to distinguish two cases:
\begin{enumerate}
\item If $h_k^+\geq0$, then find the greatest index $r$ such that 
$$h_r^+=\min\{h_{i}^+: 2\leq i\leq k-1\}$$ and find the smallest index $l$, $l>r$, such that 
$$h_l^-=h_r^+\quad\text{and} \quad h_l^-=\min\{h_{i}^-: l\leq i\leq k\}.$$
\item Otherwise, $h_k^+<0$. Find the smallest index $l$ such that 
$$h_l^-=\min\{h_{i}^-: 3\leq i\leq k\}$$
 and then find the greatest index $r$, $r<l$, such that 
$$h_r^+=h_l^-\quad\text{and} \quad h_r^+=\min\{h_{i}^+: 2\leq i\leq r\}.$$
\end{enumerate}
\item The consecutive subword $c_lc_{l+1}\cdots c_kc_1 c_2\cdots c_r$ of $C$ is the maximal Dyck permutation in $C$  that contains $c_1$.
\end{itemize}
Note that no two maximal Dyck permutations of a cycle $C$ can overlap. For example,  the interior of each maximal Dyck permutation in
 $$
\pi=(6,2,\red{3,1,12,13,10})(4)(16,5,\red{15},14,11,9,\red{20})(7)(8,18,\red{19})(17). 
 $$
 are colored red. Replacing the interior of each maximal Dyck permutation in $\pi$ by a box gives the cyclic box-permutation
 $$
 \sigma=(6,2,\Box)(4)(16,5,\Box,14,11,9,\Box)(7)(8,18,\Box)(17).
 $$

\item[(II)] Next, using the map $\psi^{-1}$ in Theorem~\ref{thm:box} to form the  box-permutation $\psi^{-1}(\sigma)$. 
Continuing with the example, we get the box-permutation
$$
\psi^{-1}(\sigma)=\blue{4\,6}\,\Box\,\blue{2\,7\,16}\,\Box\,\blue{9\,11\,14}\,\Box\,\blue{5\,18}\,\Box\blue{\,8\,17}.
$$
 \item[(III)] Finally, inserting each Dyck permutation in $D$ back to its corresponding box in $\psi^{-1}(\sigma)$ {\em properly} (in the sense of step (3) of the algorithm $\Psi$) to form  $\Psi^{-1}(\pi)$.
 \end{enumerate}

Since the map $\psi$ in  Theorem~\ref{thm:box} is bijective, it is routine to check that $\Psi$ and $\Psi^{-1}$ are inverse to each other, which completes the proof of Theorem~\ref{thm:main}.

 \subsection{Extend $\Psi$ to well-labelled positive paths}
 In the rest of this section, we aim to extend $\Psi$ from ballot permutations to well-labelled positive paths. As an application, the generating function for the number of  well-labelled positive paths is calculated smoothly. 
 
 A lattice path confined to the quarter plane $\N^2$ using steps from the set $\{\nearrow,\rightarrow,\searrow\}$ is called a {\em positive  path}. A {\em well-labelled positive path} of size $n$ is a pair $(p,\pi)$, where  
 \begin{itemize}
 \item $p$ is a positive path with $n-1$ steps starting at $(0,0)$ and
 \item  $\pi$ is a permutation in $\S_n$ satisfying  $\pi_i<\pi_{i+1}$ (resp.~$\pi_i>\pi_{i+1}$) whenever the $i$th step of $p$ is $\nearrow$ (resp.~$\searrow$) for each $i\in[n-1]$. 
 \end{itemize}
 See Fig.~\ref{positive:paths} for  a well-labelled positive path of size $20$.  Let $\PP_n$ be the set of all well-labelled positive paths of size $n$. 
 
\begin{figure}
\begin{tikzpicture}[scale=0.55, axes/.style={black}, thick, shape=circle, inner sep=0.5mm]
\draw[help lines] (0,0) grid (20.5, 7.5);
\path
(0,0)  node[draw=blue, label=below:\textcolor{blue}{2}](2){}
(5,0)  node[draw=blue, label=below:\textcolor{blue}{8}](8){}
(6,1)  node[draw=blue, label=above:\textcolor{blue}{9}](9){}
(8,1)  node[draw=blue, label=above:\textcolor{blue}{7}](7){}
(9,2)  node[draw=blue, label=above:\textcolor{blue}{14}](14){}
(10,3) node[draw=blue, label=above:\textcolor{blue}{17}](17){}
(11,4) node[draw=blue, label=above:\textcolor{blue}{18}](18){}
(12,4) node[draw=blue, label={[above=2pt]:\textcolor{blue}{4}}](4){}
(13,5) node[draw=blue, label=above:\textcolor{blue}{11}](11){}
(17,5) node[draw=blue, label=above:\textcolor{blue}{5}](5){}
(18,6) node[draw=blue, label={[above=2pt]:\textcolor{blue}{6}}](6){}
(19,6) node[draw=blue, label=above:\textcolor{blue}{16}](16){};
\path
(1,1)  node[draw=red, label=above:\textcolor{red}{10}](10){}
(2,1)  node[draw=red, label=above:\textcolor{red}{15}](15){}
(3,2)  node[draw=red, label=above:\textcolor{red}{19}](19){}
(4,1)  node[draw=red, label=above:\textcolor{red}{12}](12){}
(7,1)  node[draw=red, label=above:\textcolor{red}{1}](1){}
(14,5) node[draw=red, label=above:\textcolor{red}{13}](13){}
(15,6) node[draw=red, label=above:\textcolor{red}{20}](20){}
(16,5) node[draw=red, label=above:\textcolor{red}{3}](3){};
\draw[very thick, blue] 
(8) -- (9)
(7) -- (14) -- (17) -- (18) 
(4) -- (11)
(5) -- (6);
\draw[very thick, red]  
(2) -- (10) -- (15) -- (19) -- (12) -- (8)
(9) -- (1) -- (7)
(18) -- (4)
(11) -- (13) -- (20) -- (3) -- (5)
(6) -- (16);
\begin{scope}[axes]
\draw[->] (2) -- (21, 0) node[below=2pt] {$x$} coordinate(x axis);
\draw[->] (2) -- (0, 8)  node[left=2pt]  {$y$} coordinate(y axis);
\end{scope}
\end{tikzpicture}
\caption{
A well-labelled positive path of size $20$.
\label{positive:paths}}
\end{figure}
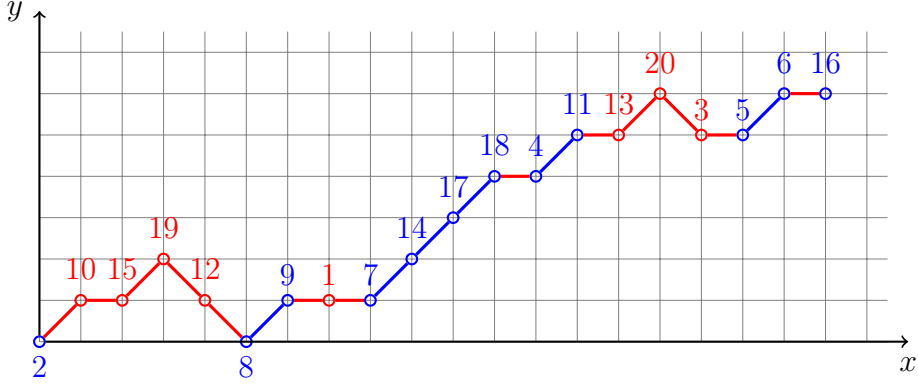

In order to extend $\Psi$ to well-labelled positive paths, we need to introduce a generalization of odd cycles. A word with distinct letters from $\Po$ is called a {\em cluster}. For convenience, we underline a cluster when it has at least two letters, i.e., size at least two. 
Two clusters are disjoint if they contain no common letter.  
A cycle $C=(c_1c_2\cdots c_k)$ is called a {\em  cluster-cycle} of order $k$ if all letters $c_i$'s are disjoint clusters.  Note that any cyclic shifting $(c_ic_{i+1}\cdots c_kc_1\cdots c_{i-1})$ is considered as the same cluster-cycle as $C$. A cluster-cycle of odd order is called an {\em odd cluster-cycle}. For example, $(2,\underline{9\,1\,7},\underline{11\,13},3)$ is a cluster-cycle of order $4$. 
A group of odd cluster-cycles   is called an {\em odd order cluster-permutation} (OCP for short) of size  $n$ if
\begin{itemize} 
\item each cluster of the cycles is a word over $[n]$,
\item and each element of $[n]$ appears once in exactly  one of the cluster of the cycles. 
\end{itemize}
For example, $(\underline{13},\underline{47},6)(\underline{25})$ is an OCP of size $7$. Note that odd order permutations are OCPs without any underlined cluster. Denote by $\C_n$ the set of all OCPs of size $n$. 

For the sake of convenience, we will represent a well-labelled positive path $(p,\pi)\in\PP_n$ by a permutation $\underline{\pi}$ whose letters are clusters, that we call a {\em cluster-permutation}, obtained from $\pi$ by underlining   each pair of adjacency letters $(\pi_i,\pi_{i+1})$ whenever the $i$th step of $p$ is $\rightarrow$. For example, for the well-labelled positive path $(p,\pi)$, its cluster-permutation representation is 
$$
\underline{\pi}=2\,\underline{10\,15}\,19\,12\,8\,\underline{9\,1\,7}\,14\,17\,\underline{18\,4}\,\underline{11\,13}\,20\,\underline{3\,5}\,\underline{6\,16},
$$
 which has $7$ clusters of size one, $5$ clusters of size two and $1$ cluster of size three. Any cluster-permutation that represents a well-labelled positive path is called a {\em ballot cluster-permutation}. In what follows, elements in $\PP_n$ are also considered as ballot cluster-permutations. For a cluster $c=\underline{a_1a_2\ldots a_k}$, let $c^r=\underline{a_ka_{k-1}\ldots a_1}$ be the {\em reverse cluster} of $c$.

\begin{theorem}\label{thm:ext}
There exists a bijection $\Phi: \PP_n\rightarrow\C_n$ such that for any $\underline{\pi}\in\PP_n$
\begin{equation}\label{cluster}
\text{$c$ is a cluster in $\underline{\pi}$ } \Longleftrightarrow \text{ either $c$ or $c^r$ is a cluster in $\Phi(\underline{\pi})$}.
\end{equation}
Moreover, $\Phi$ is an extension of $\Psi$, i.e., $\Phi|_{\B_n}=\Psi$.  
\end{theorem}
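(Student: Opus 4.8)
The plan is to construct $\Phi$ by carefully generalizing every ingredient of the construction of $\Psi$, replacing single letters by clusters throughout. The guiding principle is that a cluster $c = \underline{a_1 a_2 \cdots a_m}$ (coming from a run of horizontal steps) should behave like a single ``super-letter'' whose internal order is rigid but reversible: in the path-to-cycle direction it may be flipped to $c^r$, exactly as in condition~\eqref{cluster}. First I would redo the box-machinery of Section~\ref{sec:2} at the level of clusters: define cluster-box-permutations and cyclic cluster-box-permutations, where the increasing-segment conditions are read off on the sequence of cluster-minima (or, equivalently, treat each cluster as an indivisible token ordered by its first letter). The key point is that the comparison $\pi_i < \pi_{i+1}$ versus $\pi_i > \pi_{i+1}$ that drives the whole theory of ballot permutations is replaced by the corresponding comparison between adjacent clusters, and within a single cluster the steps are all horizontal and hence impose no ascent/descent constraint. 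I would then prove the cluster-analogue of Theorem~\ref{thm:box}: a box-neighbor-set-preserving bijection $\psi$ between cluster-box-permutations and cyclic cluster-box-permutations, obtained by running the very same Case~I--III analysis with ``letter'' read as ``cluster'' and with the reversal of a maximal integer segment now reversing the \emph{order of clusters} while also reversing each individual cluster (this is where $c^r$ enters).

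Next I would decompose a ballot cluster-permutation $\underline{\pi}\in\PP_n$ exactly as in the algorithm $\Psi$: extract the maximal consecutive sub-cluster-words that are ``Dyck'' (now meaning the associated positive path returns to its starting height using only $\nearrow,\searrow$ steps among the clusters, with horizontal steps living inside clusters and thus not affecting height), replace each by a box to obtain a cluster-box-permutation $\sigma$, apply the cluster-version of $\psi$, and reinsert the Dyck pieces into their boxes, orienting each inserted piece according to whether its box sits as $(\ldots,d_1,\Box,d_k,\ldots)$ or $(\ldots,d_k,\Box,d_1,\ldots)$ in $\psi(\sigma)$. The essential observation making~\eqref{cluster} hold is that a cluster is never split by this process: the Dyck-extraction only cuts at genuine ascent/descent boundaries between clusters, never inside a cluster; and when a maximal integer segment of clusters gets reversed (by $\phi$, or by forming a reverse odd box-cycle in Case~(3) of the proof of Theorem~\ref{thm:box}, or by the orientation-flip during reinsertion), each participating cluster $c$ is replaced by $c^r$ while preserving the multiset of clusters. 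Hence every cluster of $\underline{\pi}$ appears as either $c$ or $c^r$ in $\Phi(\underline{\pi})$, and conversely.

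To show $\Phi$ is a bijection I would mimic the construction of $\Psi^{-1}$: given an OCP, extract the maximal Dyck sub-cluster-words inside each odd cluster-cycle using the $h_i^\pm$ height functions (again computed on the cluster sequence, with horizontal steps inside clusters contributing nothing), replace them by boxes, apply the cluster-version of $\psi^{-1}$, and reinsert. The same argument as in Section~\ref{sec:3} that $\Psi$ and $\Psi^{-1}$ are mutually inverse carries over verbatim once the cluster-box bijection is established, because the only new feature---the internal reversal of clusters---is itself an involution and is bookkept consistently by the orientation rule. Finally, restricting to cluster-permutations all of whose clusters have size one (i.e.\ positive paths with no horizontal steps, which are precisely ballot permutations) recovers the original $\Psi$, giving $\Phi|_{\B_n}=\Psi$.

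The main obstacle I expect is verifying that the Case~I--III surgery in the lemma and in Theorem~\ref{thm:box} remains well-defined and bijective at the cluster level. The delicate issue is parity: the whole scheme relies on $a_1$ landing in an odd position to factor out an \emph{odd} cycle, and on reversals preserving the relevant position classes $\BP^{(1)},\BP^{(2)},\BP^{(3)}$. When letters are replaced by clusters, the ``position'' of the smallest available letter must be measured in units of clusters rather than raw letters, and one must check that odd-order cluster-cycles are exactly what gets produced---i.e.\ that counting clusters (not letters) gives the right parity. I would therefore take special care to confirm that the minimum element $a_1$ always sits at the start of its cluster (since clusters are runs of equal-height horizontal steps, their first letter is distinguished) so that the position-parity analysis transfers cleanly, and that the increasing-segment conditions defining cluster-box-permutations are the correct clusterwise translation of the original conditions. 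Once this parity bookkeeping is pinned down, the remainder is a routine but lengthy transcription of the Section~\ref{sec:2}--\ref{sec:3} arguments.
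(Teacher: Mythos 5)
Your overall instinct is right (extend $\Psi$ step by step; let an orientation flip during the cycle-forming stage account for $c$ versus $c^r$), but the heart of your plan --- redeveloping the entire Section~\ref{sec:2} box-machinery with clusters as atomic ``super-letters'' --- is exactly what the paper's proof avoids, and as you have set it up it breaks. The paper makes a single clean change to the algorithm $\Psi$: in step (1) it extracts maximal \emph{Motzkin} permutations of size at least $2$ (factors whose positive path starts and ends at the same height, horizontal steps allowed) instead of maximal Dyck permutations of size at least $3$. Since every horizontal step lies inside such a factor, every cluster of size at least two is absorbed into the set $D$, so the residual $\sigma$ is an \emph{ordinary} box-permutation on singleton letters and the original $\psi$ of Theorem~\ref{thm:box} applies verbatim --- no cluster analogue of Theorem~\ref{thm:box} is needed. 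Reinsertion with the same orientation rule then yields property~\eqref{cluster}, and parity takes care of itself: a Motzkin factor has an even number of $\nearrow,\searrow$ steps, hence an odd number of clusters, so replacing a box together with its two boundary letters (three cyclic positions) by the factor preserves the odd length of each cycle, now counted in clusters.

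In your scheme, by contrast, a cluster lying on a strictly rising stretch of the path is not inside any cluster-level excursion and survives into $\sigma$ as a token --- e.g.\ $\underline{9\,1\,7}$ in the example of Fig.~\ref{positive:paths} --- so your cluster-version of the key lemma must actually process it, and here there are two concrete failures. First, there is no single key per cluster that makes the ``increasing segment'' conditions work: ascents and descents compare the \emph{last} letter of one cluster with the \emph{first} letter of the next, and in the example the surviving tokens $8,\underline{9\,1\,7},14,17,\underline{18\,4},\underline{11\,13}$ form an ascending stretch even though neither the cluster minima $(8,1,14,17,4,11)$ nor the first letters $(8,9,14,17,18,11)$ are increasing; so ordering clusters ``by first letter or by minima,'' as you propose, is inconsistent with the path, and the lemma's case analysis (comparisons such as $t=\min(T)<\min\{a,x\}$, together with reversals that swap a cluster's endpoints) does not transcribe. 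Second, your proposed anchor --- that ``the minimum element $a_1$ always sits at the start of its cluster'' --- is simply false: horizontal steps impose no order within a cluster, and in the running example the global minimum $1$ sits in the \emph{middle} of $\underline{9\,1\,7}$. Since the whole of $\psi$ pivots on factoring an odd cycle at the position of the smallest available letter, that position is not even well defined in your setup. These are not routine bookkeeping issues to be ``pinned down''; the repair is the paper's Motzkin-factor extraction, which removes every cluster from $\sigma$ before $\psi$ is ever invoked.
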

 Since the construction of $\Phi$ is almost the same as $\Psi$, we will outlined it below and leave the rather routine details to the readers.
 \begin{proof}[{\bf Sketch of the proof of Theorem~\ref{thm:ext}}] A positive path starting at $(0,0)$ and ending at the $x$-axis is called a Motzkin path. A ballot cluster-permutation $(p,\pi)\in\PP_n$ is called a {\em Motzkin permutation} if $p$ is a Motzkin path. 
 
 {\bf The algorithm $\Phi$.} Given a ballot cluster-permutation $\underline{\pi}=(p,\pi)\in\PP_n$, we perform the following three steps to get $\Psi(\pi)\in\Od_n$:
\begin{enumerate}
\item Firstly, we  decompose $\underline{\pi}$ into a pair $(D,\sigma)$, where $D$ is the set of all maximal consecutive subwords of $\underline{\pi}$ that are Motzkin permutations of size at least $2$ and $\sigma$ is the box-permutation obtained from $\pi$ by replacing each maximal Motzkin permutation (retaining its two boundary letters)  in $D$ by a box. 

 For the example of  ballot cluster-permutation 
 $$
\underline{\pi}=2\,\underline{10\,15}\,19\,12\,8\,\underline{9\,1\,7}\,14\,17\,\underline{18\,4}\,\underline{11\,13}\,20\,\underline{3\,5}\,\underline{6\,16},
$$
in Fig.~\ref{positive:paths}, $D$ contains four Motzkin permutations which are painting in red and $\sigma=\blue{2}\,\Box\,\blue{8\,9}\,\Box\,\blue{7\,14\,17\,18}\,\Box\,\blue{4\,11}\,\Box\,\blue{5\,6}\,\Box\,\blue{16}$ is a box-permutation. 
\item Next, using the map $\psi$ in Theorem~\ref{thm:box} to form the cyclic box-permutation $\psi(\sigma)$. 
Continuing with the example, we get the cyclic box-permutation
$$
\psi(\sigma)=(8,2,\Box),(18,17,14,4,\Box),(16,5,\Box,11,9,\Box,7,6,\Box)
$$
 from~\eqref{exm2:box}. 
 \item Finally, inserting each Motzkin permutation in $D$ back to its corresponding box in $\psi(\sigma)$ {\em properly} to form  $\Phi(\underline{\pi})$. 
 Continuing with the above example, after inserting all the four Motzkin permutations back to $\psi(\sigma)$, we get the OCP
 $$
\Phi(\underline{\pi})=(8,2,\red{\underline{10\, 15}, 19,12}),(17,14,\red{\underline{4\,18}}),(\red{\underline{5\,3},20,\underline{13\,11}},\red{\underline{9\,1\,7}},\red{\underline{6\,16}}). 
 $$
 \end{enumerate}
 
 It is clear from the construction that $\Phi$ satisfies property~\eqref{cluster}. The construction of $\Phi^{-1}$ is almost identical to $\Psi^{-1}$ and will be omitted. This proves that $\Phi$ is a bijection. The second statement is clear from the construction of $\Phi$ above. 
 \end{proof} 
 Denote by $\CC_n$  the set of OCPs in $\C_n$ with only one odd cluster-cycle  and by $\CC_{n,2k+1}$ the set of cluster-cycles  in $\CC_n$ with order $2k+1$. 
 \begin{lemma}\label{lem:cyc}
 For any $n\geq1$ and $0\leq k\leq \lfloor (n-1)/2\rfloor$, 
 $$
|\CC_{n,2k+1}|={n\choose 2k+1}(n-1)!.
 $$
 \end{lemma}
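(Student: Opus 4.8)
The plan is to set up a bijection between the single odd cluster-cycles of order $2k+1$ on $[n]$ and the pairs consisting of a cyclic arrangement of the $n$ letters together with a choice of $2k+1$ of the cyclic gaps at which to cut. An element of $\CC_{n,2k+1}$ is a single odd cluster-cycle $(c_1c_2\cdots c_{2k+1})$ whose $2k+1$ clusters are nonempty ordered words partitioning $[n]$, taken up to cyclic rotation of the clusters. Reading the letters of $c_1,c_2,\ldots,c_{2k+1}$ consecutively around the cycle produces a cyclic word on $[n]$ (a necklace), and recording the $2k+1$ places where one cluster ends and the next begins marks $2k+1$ of the $n$ gaps between cyclically adjacent letters.

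First I would check that this assignment is reversible: given any cyclic arrangement of the $n$ distinct letters and any set of $2k+1$ of its $n$ gaps, cutting at those gaps splits the necklace into $2k+1$ nonempty arcs, each read as an ordered word in the cyclic direction, and these arcs assembled in cyclic order form a well-defined element of $\CC_{n,2k+1}$. The two constructions are mutually inverse, so $|\CC_{n,2k+1}|$ equals the number of such (necklace, gap-set) pairs.

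Next I would count these pairs. Since the $n$ letters are distinct, a cyclic arrangement of them has no nontrivial rotational symmetry, so the number of necklaces is $n!/n=(n-1)!$; and for each necklace there are exactly $\binom{n}{2k+1}$ ways to select $2k+1$ of the $n$ gaps. Multiplying gives $\binom{n}{2k+1}(n-1)!$, as claimed. As a consistency check, the same count can be reached by first enumerating the $n!\binom{n-1}{2k}$ linear sequences of $2k+1$ nonempty ordered blocks of $[n]$ and then dividing by $2k+1$ to quotient by the cyclic rotation of the clusters, which acts freely precisely because the clusters are pairwise disjoint.

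The only genuinely delicate point, which I would verify with care, is that the correspondence is an honest bijection rather than an over- or under-count: one must confirm that distinct (necklace, gap-set) pairs yield distinct cluster-cycles and that every cluster-cycle arises exactly once. This reduces to the rigidity of a necklace on distinct letters (no nontrivial rotation fixes it), so that cutting at a prescribed gap-set determines the clusters and their cyclic order unambiguously, and conversely the cyclic-rotation equivalence built into the definition of a cluster-cycle is matched exactly by passing from a linear word to its necklace. The stated range $0\le k\le\lfloor(n-1)/2\rfloor$ simply records the requirement that $2k+1\le n$ nonempty clusters fit into $[n]$.
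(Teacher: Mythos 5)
Your proof is correct and is essentially the paper's argument: the paper builds each cluster-cycle by arranging $[n]$ in an ordinary cycle ($(n-1)!$ ways) and underlining $n-2k-1$ of the $n$ adjacencies to merge letters into clusters, which is exactly the complement of your choice of $2k+1$ cut gaps since $\binom{n}{n-2k-1}=\binom{n}{2k+1}$. Your explicit verification that the necklace--gap-set correspondence is bijective (using that a necklace on distinct letters has no nontrivial rotational symmetry) is a welcome bit of extra care that the paper leaves implicit.
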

 \begin{proof}
 Every odd cluster-cycle  in $\CC_{n,2k+1}$ can be constructed in two steps:
 \begin{enumerate}
 \item arrange the letters $1,2,\ldots n$ in an ordinary cycle;
 \item among the $n$ pairs of adjacency letters, choose $n-2k-1$  pairs and underline them. 
 \end{enumerate}
 There are $(n-1)$ ways to form one ordinary cycle  and ${n\choose n-2k-1}={n\choose 2k+1}$ ways to choose $n-2k-1$  pairs of adjacency letters to underline them, the result then follows. 
 \end{proof}
 
 It follows from Lemma~\ref{lem:cyc} that the exponential generating function for odd cluster-cycles by size and order is
 \begin{align*}
 \sum_{n,k}|\CC_{n,2k+1}|t^{2k+1}\frac{z^n}{n!}&=\sum_{n,k}{n\choose 2k+1}(n-1)!t^{2k+1}\frac{z^n}{n!}\\
 &=\sum_n\frac{z^n}{n}\sum_k{n\choose 2k+1}t^{2k+1}\\
 &=\sum_n\frac{z^n}{n}\frac{(1+t)^n-(1-t)^n}{2}\\
 &=\frac{1}{2}\ln\biggl(\frac{1-z+zt}{1-z-zt}\biggr).
 \end{align*}
 Let us define the {\em order of an OCP} to be the sum of all its cluster-cycles, i.e., the number of clusters in its cluster-cycles. Denote by $\C_{n,k}$ the set of all OCPs in $\C_n$ with order $k$. 
 By the above e.g.f. for odd cluster-cycles and the permutation version of the Compositional Formula~\cite[Corollary~5.1.8]{st2}, the e.g.f. for OCPs by size and order is 
 \begin{equation}\label{egf:ocp}
 1+\sum_{n,k\geq1}|\C_{n,k}|t^{k}\frac{z^n}{n!}=\sqrt{\frac{1-z+zt}{1-z-zt}}.
 \end{equation}
 
 Define the {\em order of a ballot cluster-permutation} by the number of its clusters. Denote by $\PP_{n,k}$ the set of all ballot cluster-permutations (or well-labelled positive paths) in $\PP_n$ with order $k$. An immediate consequence of  Theorem~\ref{thm:ext} and Eq.~\eqref{egf:ocp} is the following e.g.f. formula for well-labelled positive paths by size and order.
 \begin{corollary}
 The e.g.f.  for well-labelled positive paths by size and order is 
  \begin{equation}\label{egf:bcp}
 1+\sum_{n,k\geq1}|\PP_{n,k}|t^{k}\frac{z^n}{n!}=\sqrt{\frac{1-z+zt}{1-z-zt}}.
 \end{equation}
 \end{corollary}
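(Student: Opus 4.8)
The plan is to deduce the corollary directly from Theorem~\ref{thm:ext} and the generating function~\eqref{egf:ocp}; all the substantive work has already been done, and what remains is to check that $\Phi$ matches the two notions of order. First I would extract from property~\eqref{cluster} the statement that $\Phi$ preserves the number of clusters. Property~\eqref{cluster} asserts that the clusters occurring in the cluster-cycles of $\Phi(\underline{\pi})$ are precisely the clusters of $\underline{\pi}$, each possibly replaced by its reverse $c^r$; since distinct clusters of $\underline{\pi}$ are built from disjoint letter sets, the matching $c\mapsto c$ or $c^r$ is a bijection between the clusters of $\underline{\pi}$ and the clusters appearing in $\Phi(\underline{\pi})$. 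In particular the two collections have the same cardinality.

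The key step is to translate this cluster-count identity into an identity of the order statistics. By definition the order of $\underline{\pi}\in\PP_n$ is the number of its clusters, while the order of the OCP $\Phi(\underline{\pi})\in\C_n$ is the total number of clusters occurring in its cluster-cycles. The bijection of the previous paragraph identifies these two counts, so $\Phi$ carries an element of order $k$ to an element of order $k$. Being a bijection $\PP_n\to\C_n$, it therefore restricts to a bijection $\PP_{n,k}\to\C_{n,k}$, whence $|\PP_{n,k}|=|\C_{n,k}|$ for all $n\geq1$ and $k\geq1$.

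Finally I would substitute this equality of coefficients into~\eqref{egf:ocp}. Replacing each $|\C_{n,k}|$ by $|\PP_{n,k}|$ in the identity $1+\sum_{n,k\geq1}|\C_{n,k}|t^{k}\frac{z^n}{n!}=\sqrt{\frac{1-z+zt}{1-z-zt}}$ produces exactly~\eqref{egf:bcp}. There is no analytic difficulty here: the sole point demanding care is the bookkeeping in the key step, namely confirming that ``number of clusters of the path'' and ``number of clusters in the cluster-cycles of the OCP'' denote the same integer, which is precisely what the reversal-correspondence in~\eqref{cluster} guarantees. The genuinely involved content, the construction of $\Phi$ and the verification of~\eqref{cluster}, is supplied by Theorem~\ref{thm:ext}, so this corollary is indeed immediate.
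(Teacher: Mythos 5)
Your proposal is correct and takes essentially the same approach as the paper, which derives \eqref{egf:bcp} as an immediate consequence of Theorem~\ref{thm:ext} and Eq.~\eqref{egf:ocp}; your write-up merely makes explicit the one point the paper leaves implicit, namely that property~\eqref{cluster} forces $\Phi$ to preserve the number of clusters and hence restricts to a bijection $\PP_{n,k}\rightarrow\C_{n,k}$.
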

 Note that Eq.~\eqref{egf:bcp} is equivalent to a result of Bernardi, Duplantier and Nadeau~\cite[Corollary~13]{BDN}, which asserts that the number of well-labelled positive paths $(p,\pi)\in\PP_n$ with $p$ having $k$ horizontal steps is
 $$
 \begin{cases}
 {n\choose k}{n-1\choose k}k![(n-k-1)!!]^2\qquad &\text{if $n-k$ is even,}\\
 {n\choose k}{n-1\choose k}k!(n-k)!!(n-k-2)!! &\text{otherwise}.
 \end{cases}
 $$
 
\section{Patterns in ballot permutations}
 \label{sec:4}
 This section deals with the enumerative results presented in Table~\ref{3-pattern}. 
 
 It is known in~\cite[Page~85]{st3} (see also~\cite{Le}) that $123$-avoiding {\em up-down permutations} of length $n$ is counted by the Catalan number $C(\lceil\frac{n}{2}\rceil)$. Since $123$-avoiding ballot permutations are necessarily up-down, we have  $|\B_n(123)|=C(\lceil\frac{n}{2}\rceil)$. 
 
 \begin{theorem}
 For $n\geq2$, 
 $$
 |\B_n(321)|=C(n)-C(n-1)=\frac{3}{n+1}{2n-2\choose n-2}.
 $$
 \end{theorem}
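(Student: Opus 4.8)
The plan is to count by the complement. Since the number of $321$-avoiding permutations of $[n]$ is the Catalan number $|\S_n(321)|=C(n)$, it suffices to prove that exactly $C(n-1)$ of them fail to be ballot; the closed form then follows from the one-line identity $C(n)-C(n-1)=\frac{3}{n+1}\binom{2n-2}{n-2}$, which I would verify at the end using $\binom{2n}{n}=\frac{2(2n-1)}{n}\binom{2n-2}{n-1}$ and $\binom{2n-2}{n-2}=\frac{n-1}{n}\binom{2n-2}{n-1}$.

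First I would reduce the (global) ballot condition to a condition on the first two entries. The crucial observation is that a $321$-avoiding permutation can never have two consecutive descents, since $\pi_i>\pi_{i+1}>\pi_{i+2}$ is itself an occurrence of $321$. I claim that for $\pi\in\S_n(321)$ the conditions ``$\pi$ is ballot'' and ``$\pi_1<\pi_2$'' are equivalent. If $\pi_1>\pi_2$, then already the prefix $\pi_1\pi_2$ has more descents than ascents, so $\pi$ is not ballot. Conversely, suppose $\pi_1<\pi_2$, and for each $i\in[n-1]$ record whether $i$ is an ascent ($\pi_i<\pi_{i+1}$) or a descent ($\pi_i>\pi_{i+1}$). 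Because there are no two consecutive descents and position $1$ is an ascent, every descent occupies some position $j\ge 2$ and is immediately preceded by an ascent at position $j-1$; the assignment sending the descent at $j$ to the ascent at $j-1$ is injective and strictly decreases the position. Hence in every prefix the number of descents does not exceed the number of ascents, so $\pi$ is ballot. This establishes $\B_n(321)=\{\pi\in\S_n(321):\pi_1<\pi_2\}$, and therefore the non-ballot $321$-avoiders are exactly those with $\pi_1>\pi_2$.

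Next I would count $\{\pi\in\S_n(321):\pi_1>\pi_2\}$ by a deletion bijection onto $\S_{n-1}(321)$. A short argument pins down the second entry: if $\pi_1>\pi_2$ and some value $v<\pi_2$ appeared at a position $\ge 3$, then $\pi_1>\pi_2>v$ would be a $321$; since $v$ cannot equal $\pi_1$ either, no value below $\pi_2$ can exist at all, forcing $\pi_2=1$. Thus $\{\pi\in\S_n(321):\pi_1>\pi_2\}=\{\pi\in\S_n(321):\pi_2=1\}$. Deleting the letter $1$, which sits in position $2$, and standardizing yields a permutation of $[n-1]$, and deletion preserves $321$-avoidance. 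Conversely, given $\tau\in\S_{n-1}(321)$, I would insert a new minimal letter into position $2$: since position $1$ holds the only letter to the left of the inserted minimum, no new $321$ can arise (a $321$ using the minimum as its smallest entry would require two larger entries preceding it). These maps are mutually inverse, so $|\{\pi\in\S_n(321):\pi_2=1\}|=|\S_{n-1}(321)|=C(n-1)$.

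Combining the two steps gives $|\B_n(321)|=|\S_n(321)|-C(n-1)=C(n)-C(n-1)$, which I then rewrite in the stated binomial form. The only genuinely non-routine point is the first step: recognizing that, thanks to the forced absence of consecutive descents, the global ballot condition collapses to the purely local requirement $\pi_1<\pi_2$. I expect this to be the main obstacle because the obvious alternative---summing, over all admissible descent sets, the number of $321$-avoiding permutations with a prescribed descent set---is genuinely hard, since $321$-avoidance interacts intricately with several descents; the reduction above is precisely what lets me avoid that computation.
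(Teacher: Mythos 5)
Your proof is correct and takes essentially the same route as the paper: both count the complement inside $\S_n(321)$, characterize the non-ballot $321$-avoiders as exactly those with $\pi_2=1$, and biject them with $\S_{n-1}(321)$ by deleting the second entry. The only difference is how the characterization is justified---the paper invokes the excedance-values description of $321$-avoiding permutations, whereas your observation that $321$-avoidance forbids two consecutive descents (so ballot collapses to $\pi_1<\pi_2$) is self-contained and arguably more transparent.
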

 \begin{proof}
 It is  known that a permutation is $321$-avoiding if and only if both the subsequence formed by its excedance values and the one formed by the remaining non-excedance values are increasing.  Thus, a $321$-avoiding permutation  $\pi\in\S_n$ that is not a ballot permutation in $\B_n(321)$ if and only if $\pi_2=1$. It follows that the map $\pi\mapsto\pi'$, where $\pi'$ is obtained from $\pi$ by removing its second entry and subtracting the other entries by one, sets up an one-to-one correspondence between $\S_n(321)\setminus\B_n(321)$ and $\S_{n-1}(321)$. The result then follows from the fact~\cite{SS} that $|\S_n(321)|=C(n)$. 
 \end{proof}

 \begin{figure}
\begin{tikzpicture}[scale=0.55, axes/.style={black}, thick, shape=circle, inner sep=0.5mm]
\draw[help lines] (0,0) grid (11.5, 5.5);
\path

(0,2)  node[draw=blue, label=left: \textcolor{blue}{4}](4){}
(1,3)  node[draw=blue, label=above:\textcolor{blue}{8}](8){}
(2,2)  node[draw=blue, label=above:\textcolor{blue}{5}](5){}
(3,1)  node[draw=blue, label=above:\textcolor{blue}{2}](2){}
(4,2)  node[draw=blue, label=above:\textcolor{blue}{11}](11){}
(5,1)  node[draw=blue, label=above:\textcolor{blue}{1}](1){}
(6,2)  node[draw=blue, label=above:\textcolor{blue}{6}](6){}
(7,3)  node[draw=blue, label=above:\textcolor{blue}{10}](10){}
(8,2)  node[draw=blue, label=above:\textcolor{blue}{3}](3){}
(9,3)  node[draw=blue, label=above:\textcolor{blue}{7}](7){}
(10,4)  node[draw=blue, label=above:\textcolor{blue}{9}](9){};

\draw[very thick, blue] 
(4) -- (8)--(5)--(2)--(11)--(1)--(6)--(10)--(3)--(7)--(9);

\draw[very thick, dashed, red]  
(9)--(10,0)
(-0.05,1.85)--(-0.05,0);
\begin{scope}[axes]
\draw[->] (0,0) -- (12, 0) node[below=2pt] {$x$} coordinate(x axis);
\draw[->] (0,0) -- (0, 6)  node[left=2pt]  {$y$} coordinate(y axis);
\draw (-0.4,1) node{$h$};
\draw (10.4,2) node{$b$};
\end{scope}
\end{tikzpicture}
\caption{A $(h,b)$-ballot permutation  $2\,8\,5\,4\,11\,1\,6\,10\,3\,7\,9$ with $(h,b)=(2,4)$.
\label{ballot:per}}
\end{figure}
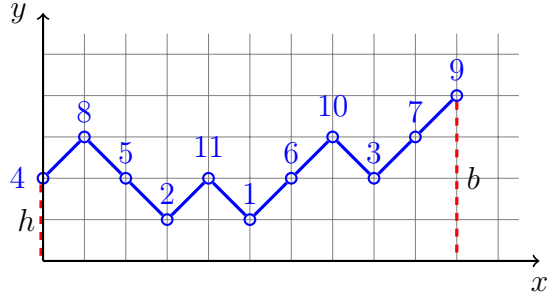
 
 In order to prove Theorem~\ref{thm:213}, we need to consider another generalization of ballot permutations. 
 For two nonnegative integers $h$ and $b$, a permutation $\pi\in\S_{n+1}$ is called a {\em $(h,b)$-ballot permutation} if the lattice path starting at $(0,h)$, whose $i$th step  is $\nearrow$ (resp.~$\searrow$) if $\pi_i<\pi_{i+1}$ (resp.~$\pi_i>\pi_{i+1}$) for each $i\in[n]$,  and ending exactly at $(n,b)$ does  not pass below the $x$-axis.  See Fig.~\ref{ballot:per} for an example of $(2,4)$-ballot permutation. Note that each usual ballot permutation is just a $(0,b)$-ballot permutation for certain $b\geq0$. Let $\B_{n+1}^{(h,b)}$ be the set of all  $(h,b)$-ballot permutations in $\S_{n+1}$. The introduction of $(h,b)$-ballot permutation turns out to be extremely important in our proof of  Theorem~\ref{thm:213}. In fact, the following generalization of Theorem~\ref{thm:213} is true.
 
\begin{theorem}\label{ges:213}
Let $E_n(h,b):=|\B_{n+1}^{(h,b)}(213)|$ and let $F_n(h,b):=|\G(n;h,b)|$. Then,
\begin{equation}\label{iden:ges213}
E_n(h,b)=F_n(h,b)\qquad\text{for any integers $n,h,b\geq0$}. 
\end{equation}
\end{theorem}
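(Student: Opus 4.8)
The plan is to prove \eqref{iden:ges213} by induction on $n$, by exhibiting a single recurrence that both sides satisfy together with the common initial value $E_0(h,b)=F_0(h,b)$, equal to $1$ if $h=b$ and $0$ otherwise. Since a ballot permutation is precisely a $(0,b)$-ballot permutation for some $b\ge0$, once \eqref{iden:ges213} is known, summing it over $b$ gives $|\B_{n+1}(213)|=\sum_{b}|\G(n;0,b)|$, which is Theorem~\ref{thm:213}; so it suffices to establish the refined two-parameter identity.

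The engine on the permutation side is a decomposition of $213$-avoiding permutations at the letter $1$. If $\pi\in\S_{n+1}$ avoids $213$ and $1$ occupies position $m$, then every letter left of $m$ must exceed every letter right of $m$ (otherwise a left letter, the $1$, and a larger right letter would form a $213$), so $\pi=\alpha\,1\,\beta$ with $\min(\alpha)>\max(\beta)$; conversely such a factorization avoids $213$ if and only if both $\alpha$ and $\beta$ do, because every occurrence of $213$ lies entirely inside $\alpha$ or entirely inside $\beta$. Reading off heights, $\alpha$ is an $(h,h')$-ballot factor, the descent into $1$ followed by the ascent out of it is a valley of depth one reaching height $h'-1$ (forcing $h'\ge1$ when $\alpha,\beta\neq\emptyset$), and $\beta$ is an $(h',b)$-ballot factor. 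The boundary cases $\alpha=\emptyset$ (the $1$ leads, contributing an initial ascent to height $h+1$) and $\beta=\emptyset$ (the $1$ trails, contributing a final descent from height $b+1$) are separated out. This yields
\[
E_n(h,b)=E_{n-1}(h+1,b)+E_{n-1}(h,b+1)+\sum_{\substack{a+c=n\\ a,c\ge1}}\ \sum_{h'\ge1}E_{a-1}(h,h')\,E_{c-1}(h',b),
\]
a recurrence that expresses $E_n$ through values with strictly smaller first index and hence determines the whole array from $E_0$.

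It remains to show that the Gessel counts $F_n(h,b)$ obey this same recurrence. Two structural features of Gessel walks are available: the reversal symmetry $F_n(h,b)=F_n(b,h)$, obtained by reading a walk backwards and using that $\{\uparrow,\downarrow,\nearrow,\swarrow\}$ is stable under negation; and the first-step decomposition, in which $\uparrow$ contributes $F_{n-1}(h+1,b)$, $\downarrow$ contributes $F_{n-1}(h-1,b)$, the step $\swarrow$ is forbidden on the $y$-axis, and $\nearrow$ opens an excursion in $x\ge1$ that, after shifting $x$ down by one, is itself a Gessel walk and is closed by the first return $\swarrow$, producing a convolution $\sum_{h'\ge1}\sum_m F_m(h+1,h')F_{n-2-m}(h'-1,b)$. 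The $y$-coordinate of a walk traces exactly a ballot path from $h$ to $b$, which is the skeleton shared with the ballot permutation, so the two sides are two cuts of one object. The main obstacle is that these cuts are of different natures: the permutation is split at the \emph{value} $1$, the global minimum, whereas the walk is split geometrically at the $y$-axis, and the two do not coincide (the value $1$ need not sit at the lowest height). In particular the identity silently forces $E_n(h,b)=E_n(b,h)$, which is invisible on $213$-avoiding permutations since reversal sends $213$ to $312$, so no naive symmetry argument will close the gap. I expect to reconcile the recurrences either by a careful walk decomposition that reproduces the displayed convolution directly, or—as the robust route—by encoding the recurrence as a functional equation for the trivariate series $\sum_{n,h,b}E_n(h,b)x^hy^bt^n$ and matching it with the kernel equation governing Gessel walks confined to the quarter plane; here the restriction $h'\ge1$ in the convolution, mirroring the confinement $x\ge1$ during an excursion, is precisely the feature that must be tracked. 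With $E_n(h,b)=F_n(h,b)$ in hand, \eqref{iden:ges213} and hence Theorem~\ref{thm:213} follow.
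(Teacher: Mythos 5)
Your two decompositions are individually correct, but they do not match, and bridging them is the entire difficulty of the theorem --- so the argument as written has a genuine gap. An induction on $n$ needs a \emph{single} recurrence satisfied by both arrays. Cutting $\pi\in\B_{n+1}^{(h,b)}(213)$ at the value $1$ gives, as you derive,
\begin{equation*}
E_n(h,b)=E_{n-1}(h+1,b)+E_{n-1}(h,b+1)+\sum_{\substack{0\le i\le n-2\\ h'\ge 1}}E_i(h,h')\,E_{n-2-i}(h',b),
\end{equation*}
while the first passage of a Gessel walk to the $y$-axis gives
\begin{equation*}
F_n(h,b)=F_{n-1}(h+1,b)+F_{n-1}(h-1,b)+\sum_{\substack{0\le i\le n-2\\ h'\ge 1}}F_i(h+1,h')\,F_{n-2-i}(h'-1,b).
\end{equation*}
These differ already in the linear terms ($E_{n-1}(h,b+1)$ versus $F_{n-1}(h-1,b)$) as well as in the convolution, so the induction cannot run on this pair; showing that the two recursions generate the same array is essentially equivalent to the theorem itself (it would, in particular, encode the symmetry $E_n(h,b)=E_n(b,h)$, which, as you note, has no visible explanation on the permutation side). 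Your sentence ``I expect to reconcile the recurrences either by a careful walk decomposition \dots or \dots the kernel equation'' is where the proof stops, and neither route is routine: the paper's second remark following Lemma~\ref{lema:213} records precisely your value-$1$ recursion and states that the authors \emph{failed} to find a parallel decomposition of Gessel walks realizing it, calling it mysterious; and the kernel/functional-equation fallback is the notoriously hard Gessel quadrant model, not a safe default.

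The repair is to cut the permutation at the \emph{position} $1$ (its first letter) rather than at the value $1$, which is exactly what the paper does in Lemma~\ref{lema:213}. If $\pi_1=k$, then since $k$, a later smaller letter, and a still later larger letter would form a $213$, every letter exceeding $k$ must precede every letter below $k$; hence $\pi=k\,\pi'\,\pi''$ with $\pi'$ a $213$-avoiding permutation of $[k+1,n+1]$ and $\pi''$ one of $[k-1]$. The cases $k=1$ and $k=n+1$ contribute $E_{n-1}(h+1,b)$ and $E_{n-1}(h-1,b)$; for $1<k<n+1$ the first step ascends to height $h+1$, $\pi'$ is a $213$-avoiding $(h+1,a+1)$-ballot permutation, and the descent at the junction lands at height $a$, from which $\pi''$ is an $(a,b)$-ballot permutation. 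This reproduces the walk recurrence \eqref{ges:wal} term by term ($\uparrow$, $\downarrow$, and the $\nearrow$--excursion--$\swarrow$ factorization, the excursion shifted by one in $x$ being again a Gessel walk), after which your induction closes exactly as you planned. One small point in your favor: your initial condition $E_0(h,b)=F_0(h,b)=\chi(h=b)$ is the correct one; the $\chi(h=b=0)$ printed in Lemmas~\ref{lem:ges} and~\ref{lema:213} should be read as $\chi(h=b)$, since a $0$-step walk from $(0,h)$ to $(0,b)$ exists precisely when $h=b$.
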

We will prove this equinumerosity by showing $E_n(h,b)$ and $F_n(h,b)$ share  the same recurrence relation. 

\begin{lemma}\label{lem:ges}
The number of Gessel walks $F_n(h,b)$ satisfies the recurrence relation:
\begin{equation}\label{ges:wal}
F_n(h,b)=F_{n-1}(h+1,b)+F_{n-1}(h-1,b)+\sum_{0\leq i\leq n-2\atop{0\leq a}}F_i(h+1,a+1)F_{n-2-i}(a,b)
\end{equation}
for $n\geq1$ and $h,b\geq0$ with initial condition $F_0(h,b)=\chi(h=b=0)$.
\end{lemma}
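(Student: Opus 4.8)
The plan is to prove the recurrence \eqref{ges:wal} by decomposing each walk counted by $F_n(h,b)$ according to its first step. Recall that a Gessel walk uses steps from $\{\uparrow,\downarrow,\nearrow,\swarrow\}$ and stays in $\N^2$, and that every walk counted by $F_n(h,b)$ begins at the point $(0,h)$ on the $y$-axis. From such a point the step $\swarrow=(-1,-1)$ is forbidden, as it would leave the quarter plane through $x=-1$; the step $\downarrow$ is legal only when $h\ge 1$; and $\uparrow$, $\nearrow$ are always legal. I will show that these possibilities contribute the three summands of \eqref{ges:wal}, adopting throughout the convention $F_m(h,b)=0$ whenever $h<0$, which is forced by the definition.

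First I would dispose of the two easy cases. If the first step is $\uparrow$, the walk moves to $(0,h+1)$ and its remaining $n-1$ steps form an arbitrary Gessel walk from $(0,h+1)$ to $(0,b)$, giving the term $F_{n-1}(h+1,b)$. If the first step is $\downarrow$, the walk moves to $(0,h-1)$ and the rest is an arbitrary Gessel walk from $(0,h-1)$ to $(0,b)$, giving $F_{n-1}(h-1,b)$; the illegal case $h=0$ is absorbed automatically since $F_{n-1}(-1,b)=0$. Both cases are mere bookkeeping.

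The heart of the argument is the case where the first step is $\nearrow$, landing at $(1,h+1)$ off the $y$-axis. Here I would apply a \emph{first-return} decomposition. Since the walk terminates at $(0,b)$ on the $y$-axis, let $m\ge 2$ be the first index at which it returns to the line $x=0$. As $\swarrow$ is the unique step decreasing the $x$-coordinate, the $m$-th step is necessarily $\swarrow$, carrying the walk from some point $(1,a+1)$ to $(0,a)$ with $a\ge 0$; moreover every lattice point strictly between the opening $\nearrow$ and this return lies in the region $x\ge 1$. Consequently the walk factors as: the step $\nearrow$; an excursion of length $i$ from $(1,h+1)$ to $(1,a+1)$ confined to $x\ge 1$; the return step $\swarrow$; and a final Gessel walk of length $n-2-i$ from $(0,a)$ to $(0,b)$, enumerated by $F_{n-2-i}(a,b)$. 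The crucial observation is that the horizontal translation by $(-1,0)$ is a length-preserving bijection from these excursions onto the Gessel walks from $(0,h+1)$ to $(0,a+1)$: the constraint $x\ge 1$ becomes $x\ge 0$ while the $y$-coordinate is untouched, so excursions are counted by $F_i(h+1,a+1)$. Summing $F_i(h+1,a+1)F_{n-2-i}(a,b)$ over $0\le i\le n-2$ and $a\ge 0$ reproduces exactly the last term of \eqref{ges:wal}.

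I expect the translation step in the $\nearrow$ case to be the only genuine obstacle, and the points requiring care are: that the first-return point $(0,a)$ satisfies $a\ge 0$, which pins down the excursion endpoint $(1,a+1)$ and makes the shift $a+1\mapsto a$ in the returning step precise; that the excursion never meets $x=0$, so that its translate really lands in $\N^2$; and that regluing the four pieces yields a legitimate Gessel walk, which holds because consecutive pieces are joined at lattice points already in $\N^2$ and each piece respects the quarter-plane boundary. The initial condition is then immediate from the definition upon inspecting the zero-step walks, and it anchors the induction.
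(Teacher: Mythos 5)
Your proposal is correct and follows essentially the same argument as the paper: a first-step case analysis combined with a first-return-to-the-$y$-axis decomposition in the $\nearrow$ case, yielding the three terms of \eqref{ges:wal}. You merely spell out details the paper leaves implicit (the forced $\swarrow$ at the return, the translation bijection for the $x\ge 1$ excursion, and the convention $F_m(-1,b)=0$), all of which are sound.
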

\begin{proof}
Consider the first time after the starting point that a Gessel walk $G\in\G(n;h,b)$ arrives the $y$-axis: 
\begin{enumerate}
\item The first step of $G$ is $\uparrow$. The number of such walks is $F_{n-1}(h+1,b)$.
\item The first step of $G$ is $\downarrow$. The number of such walks is $F_{n-1}(h-1,b)$.
\item The first step of $G$ is $\nearrow$. For fixed integers $a,i\geq0$, the number of such walks that first arrive the $y$-axis at $(0,a)$ after $i+2$ steps is $F_i(h+1,a+1)F_{n-2-i}(a,b)$.
\end{enumerate}
Summing over all the above three cases gives~\eqref{ges:wal}.
\end{proof}

For integers $1\leq k\leq n$, let $[k,n]:=\{k,k+1,\ldots,n\}$. Lemma~\ref{lem:ges} together with the following recursion for $E_n(h,b)$ proves Theorem~\ref{ges:213}. 
\begin{lemma}\label{lema:213}
The number of permutations  $E_n(h,b)$ satisfies the recurrence relation:
\begin{equation}\label{213:bal}
E_n(h,b)=E_{n-1}(h+1,b)+E_{n-1}(h-1,b)+\sum_{0\leq i\leq n-2\atop{0\leq a}}E_i(h+1,a+1)E_{n-2-i}(a,b)
\end{equation}
for $n\geq1$ and $h,b\geq0$ with initial condition $E_0(h,b)=\chi(h=b=0)$.
\end{lemma}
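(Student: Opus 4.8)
The plan is to set up a bijective decomposition of each $213$-avoiding $(h,b)$-ballot permutation according to its first entry, and to read off the three terms of the recurrence from the three possible ``types'' of that entry. The engine is the classical block decomposition of $213$-avoiding permutations: if $\pi=\pi_1\pi_2\cdots\pi_{n+1}$ avoids $213$ and $v:=\pi_1$, then every entry smaller than $v$ occurs to the right of every entry larger than $v$, so that $\pi=v\,H\,L$, where $H$ is the (possibly empty) word formed by the entries exceeding $v$ and $L$ is the (possibly empty) word formed by the entries below $v$; moreover $\pi$ avoids $213$ if and only if both $H$ and $L$ avoid $213$. First I would prove this equivalence: putting $\pi_1=v$ in the role of the ``$2$'' shows that no small entry may precede a large one, forcing the block structure, and the only potential cross-patterns are ruled out because the large block $H$ entirely precedes the small block $L$. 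I would then record that, after order-isomorphism, $H$ and $L$ may be taken to be arbitrary $213$-avoiding permutations of $[\,n+1-v\,]$ and $[\,v-1\,]$.

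Next I would overlay the ballot path on this decomposition. Writing the heights along the path of $\pi$ as $h=h_0,h_1,\dots,h_n$ with $h_i-h_{i-1}=\pm1$, the entry $\pi_1=v$ sits at height $h$. If $H\neq\emptyset$ then $\pi_2\in H$ exceeds $v$, so the first step is an ascent and $H$ is traced starting at height $h+1$; if $H=\emptyset$ the first step is a descent. When both $H$ and $L$ are nonempty, the last entry of $H$ exceeds $v$ while the first entry of $L$ lies below $v$, so the junction step $H\to L$ is a descent; denoting by $a+1$ the height reached at the end of $H$, the block $L$ therefore begins at height $a$. The ballot condition $h_i\ge0$ passes to each block verbatim, so $H$ is a $213$-avoiding $(h+1,a+1)$-ballot permutation and $L$ is a $213$-avoiding $(a,b)$-ballot permutation. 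Conversely, any such pair $(H,L)$ reassembles into a unique $\pi=v\,H\,L$ that is $213$-avoiding and $(h,b)$-ballot, so $\pi\mapsto(v,H,L)$ is a bijection.

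Now I would split according to $v$. The case $v=n+1$ forces $H=\emptyset$ and a descent first step, giving $\pi=(n+1)\,L$ with $L$ an arbitrary $213$-avoiding $(h-1,b)$-ballot permutation of length $n$, contributing $E_{n-1}(h-1,b)$. The case $v=1$ forces $L=\emptyset$ and an ascent first step, giving $\pi=1\,H$ with $H$ an arbitrary $213$-avoiding $(h+1,b)$-ballot permutation of length $n$, contributing $E_{n-1}(h+1,b)$. In the remaining range $2\le v\le n$ both blocks are nonempty: $H$ has length $n+1-v$, hence $n-v$ steps, and $L$ has length $v-1$, hence $v-2$ steps. Setting $i:=n-v$ (so $v$ runs over $2,\dots,n$ exactly as $i$ runs over $0,\dots,n-2$) and summing over the junction height $a\ge0$, this family of permutations contributes
\begin{equation*}
\sum_{i=0}^{n-2}\sum_{a\ge0} E_i(h+1,a+1)\,E_{n-2-i}(a,b).
\end{equation*}
Adding the three contributions yields \eqref{213:bal}. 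I would close by checking the base case $n=0$, where $\S_1=\{1\}$ is an $(h,b)$-ballot permutation exactly when $h=b$, so $E_0(h,b)=\chi(h=b)$, which is the initial condition also governing $F_n$ in Lemma~\ref{lem:ges}; and by adopting the convention $E_m(-1,b)=0$ (a descent out of height $0$ would leave the region $h\ge0$), which is precisely what makes the $h=0$ instances of the $(h-1,b)$- and $(a,b)$-terms vanish.

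The step I expect to require the most care is the height bookkeeping at the two junctions, i.e.\ verifying that the end of $H$ sits exactly one unit above the start of $L$ and that deleting $\pi_1$ raises the base height of $H$ by exactly one. This single off-by-one is what converts the ``large block then small block'' decomposition into the precise pair of types $(h+1,a+1)$ and $(a,b)$ demanded by \eqref{213:bal}; getting either shift wrong would misalign the convolution. A secondary point to check rigorously is that the decomposition is genuinely a bijection under the ballot constraint---that the inequality $h_i\ge0$ restricts cleanly to each block and that no boundary case (an empty block, or $a=-1$) is double counted---so that $H$ and $L$ really range over the full sets counted by the $E$'s with no spurious side conditions.
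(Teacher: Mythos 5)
Your proof is correct and follows essentially the same route as the paper's: both decompose a $213$-avoiding permutation by its first entry $v$ into $v\,H\,L$ with the block $H$ of entries above $v$ preceding the block $L$ of entries below $v$, split into the cases $v=1$, $v=n+1$, and $2\le v\le n$ (the paper writes $v=n-i$), and track the path heights across the two junctions to obtain the types $(h+1,a+1)$ and $(a,b)$ in the convolution, with your write-up merely making the height bookkeeping and the bijectivity check more explicit. One point in your favor: your base case $E_0(h,b)=\chi(h=b)$ is the mathematically correct one---the stated initial condition $\chi(h=b=0)$ in the lemma (and likewise in Lemma~\ref{lem:ges}) is evidently a typo, since a one-letter permutation of type $(h,b)$, respectively a zero-step Gessel walk from $(0,h)$ to $(0,b)$, exists precisely when $h=b$; because the same convention appears in both lemmas, the conclusion $E_n(h,b)=F_n(h,b)$ of Theorem~\ref{ges:213} is unaffected.
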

\begin{proof}
Any $213$-avoiding permutation $\pi\in\S_{n+1}(213)$ with $\pi_1=k$ can be decomposed into $k\pi'\pi''$, where $\pi'$ is a $213$-avoiding permutation of $[k+1,n+1]$ and $\pi''$ is a $213$-avoiding permutation of $[k-1]$. Now consider the first letter of a permutation $\pi\in\B_{n+1}^{(h,b)}(213)$:
\begin{enumerate}
\item If $\pi_1=1$, then $\pi=1\pi'$ with $\pi'$ a $213$-avoiding $(h+1,b)$-ballot permutation of  $[2,n+1]$. There are $E_{n-1}(h+1,b)$ such permutations arising in this case. 
\item If $\pi_1=n+1$, then $\pi=(n+1)\pi''$ with $\pi''$ a $213$-avoiding $(h-1,b)$-ballot permutation of  $[n]$. There are $E_{n-1}(h-1,b)$ such permutations arising in this case. 
\item If $\pi_1=n-i$ for some $0\leq i\leq n-2$, then $\pi=(n-i)\pi'\pi''$, where $\pi'$  is a $213$-avoiding permutation of $[n-i+1,n+1]$ and $\pi''$ is a $213$-avoiding permutation of $[n-i-1]$. Moreover, if $\pi'$ is a $(h+1,a+1)$-ballot permutation for some $a\geq0$, then $\pi''$ is a $(a,b)$-ballot permutation. Thus, there are $E_{i}(h+1,a+1)E_{n-2-i}(a,b)$ such permutations arising in this case for any fixed integer $a\geq0$. 
\end{enumerate}
Summing over all the above three cases gives~\eqref{213:bal}. 
\end{proof}

We have two remarks concerning our approach to~\eqref{iden:ges213}.  
\begin{remark}
The proofs of Lemmas~\ref{lem:ges} and~\ref{lema:213} actually  induce a recursive bijection between $\G(n;h,b)$ and $\B_{n+1}^{(h,b)}(213)$. 
\end{remark}

\begin{remark}
By considering the position of the letter $1$, any $\pi\in\S_{n+1}(213)$ can be decomposed as $\pi=\pi'1\pi''$, where $\pi'$ and $\pi''$ are both $213$-avoiding and all letters in $\pi'$ are greater than that in $\pi''$. This consideration leads to another recursion for $E_n(h,b)$ essentially different with~\eqref{213:bal}:  
\begin{equation}
E_n(h,b)=E_{n-1}(h+1,b)+E_{n-1}(h,b+1)+\sum_{0\leq i\leq n-2\atop{0\leq a}}E_i(h,a+1)E_{n-2-i}(a+1,b).
\end{equation}
We failed to find a parallel decomposition of Gessel walks that could lead to the same recursion as above for $F_n(h,b)$. It remains mysterious to find such a decomposition for Gessel walks. 
\end{remark}

An interesting consequence of Theorem~\ref{ges:213} is a new interpretation of $g_n$. 

\begin{corollary}
The number of $213$-avoiding Dyck permutations of length $n$ is 
$$
g_n=16^n\frac{(5/6)_n(1/2)_n}{(5/3)_n(2)_n},
$$
which enumerates $2n$-step Gessel walks that starting and ending at $(0, 0)$.
\end{corollary}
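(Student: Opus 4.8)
The plan is to obtain the corollary as the diagonal specialization $h=b=0$ of the refined equinumerosity in Theorem~\ref{ges:213}, so that essentially no new enumeration is required beyond one definitional identification. First I would observe that a Dyck permutation of length $2n+1$ is exactly a $(0,0)$-ballot permutation in $\S_{2n+1}$: by definition a Dyck permutation $\pi\in\S_{2n+1}$ satisfies $\asc(\pi)=\des(\pi)=n$, so its associated lattice path has $2n$ steps, starts at $(0,0)$, never drops below the $x$-axis, and returns to height $0$ at $(2n,0)$. Conversely, a $(0,0)$-ballot permutation in $\S_{2n+1}$ has a path of $2n$ steps from $(0,0)$ to $(2n,0)$ staying nonnegative, which forces $\asc=\des=n$. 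Hence the set of $213$-avoiding Dyck permutations of length $2n+1$ is precisely $\B_{2n+1}^{(0,0)}(213)$, whose cardinality is $E_{2n}(0,0)$ in the notation of Theorem~\ref{ges:213}.

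Next I would apply Theorem~\ref{ges:213} at the index $2n$ with $h=b=0$, giving $E_{2n}(0,0)=F_{2n}(0,0)=|\G(2n;0,0)|$. By definition $|\G(2n;0,0)|$ counts the $2n$-step Gessel walks that start and end at $(0,0)$, which is exactly $g_n$. Combining these equalities yields that the number of $213$-avoiding Dyck permutations of length $2n+1$ equals $g_n$, and the closed hypergeometric expression for $g_n$ is the Gessel walk formula already established in~\cite{KKZ,BM2016}.

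The only real content is the bookkeeping in the first paragraph, namely matching the length $2n+1$ of a Dyck permutation to the $2n$-step walk count and verifying that the nonnegativity-plus-return-to-zero condition is literally the balance condition $\asc=\des=n$; once this is in place the statement is an immediate corollary. I do not expect a genuine obstacle here, since Theorem~\ref{ges:213} performs all the enumerative work; the one point I would spell out explicitly is the index shift (length $2n+1$ of the permutation versus $2n$ steps of the walk versus the subscript $n$ in $g_n$), so as to avoid an off-by-one error in the final identification.
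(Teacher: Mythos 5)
Your proposal is correct and matches the paper's (implicit) argument exactly: the corollary is the specialization $h=b=0$, $n\mapsto 2n$ of Theorem~\ref{ges:213}, after identifying $213$-avoiding Dyck permutations of length $2n+1$ with $\B_{2n+1}^{(0,0)}(213)$, so that $E_{2n}(0,0)=F_{2n}(0,0)=|\G(2n;0,0)|=g_n$ by the resolved Gessel conjecture. Your explicit attention to the index bookkeeping (length $2n+1$ versus $2n$ steps versus the subscript $n$ in $g_n$) is exactly the right point to spell out, especially since the corollary's phrase ``length $n$'' should read ``length $2n+1$''.
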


Regarding $231$-avoiding ballot permutations, we have a connection to another class of lattice walks  that was studied by Gouyou-Beauchamps in~\cite{GB}. A {\em Gouyou-Beauchamps walk} ({\em GB walk} for short) is a lattice path  confined to the quarter plane $\N^2$ using steps from the set $\{\uparrow,\downarrow,\leftarrow,\rightarrow\}$ and lying weakly below the main diagonal $y=x$. See Fig.~\ref{G:GBwalks} (right graph) for an example of GB walk. A  nice enumerative result due to Gouyou-Beauchamps~\cite{GB} asserts that the number of $n$-step GB walks that starting  at $(0, 0)$ and ending at $x$-axis is the product of Catalan numbers 
$$
C(\lfloor (n+1)/2\rfloor)C(\lfloor(n+2)/2\rfloor).
$$
This integer sequence appears as A005817 in the OEIS~\cite{oeis}, where several other intriguting combinatorial interpretations are known. 

For any $h,b\in\N$, let $\Hw(n;h,b)$ be the set of  $n$-step GB walks that starting at $(h,0)$ and ending at $(b,0)$. The following result for $231$-avoiding ballot permutations is parallel to Theorem~\ref{ges:213}.

\begin{theorem}\label{gb:231}
Let $G_n(h,b):=|\B_{n+1}^{(h,b)}(231)|$ and let $H_n(h,b):=|\Hw(n;h,b)|$. Then,
\begin{equation}\label{iden:231}
G_n(h,b)=H_n(h,b)\qquad\text{for any integers $n,h,b\geq0$}. 
\end{equation}
In particular, $|\B_{n}(231)|=C(\lfloor n/2\rfloor)C(\lfloor(n+1)/2\rfloor)$.
\end{theorem}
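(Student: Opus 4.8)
The plan is to prove~\eqref{iden:231} by the same strategy used for Theorem~\ref{ges:213}: I would show that $G_n(h,b)$ and $H_n(h,b)$ satisfy one and the same recurrence with matching initial data, and then extract the closed form by summing over the terminal height and invoking the enumeration of Gouyou-Beauchamps. Note that the recurrence here will \emph{not} be the Gessel recurrence of Lemma~\ref{lem:ges}; the pattern $231$ produces a decomposition dual to that of the pattern $213$ in Lemma~\ref{lema:213}.

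First I would record a recurrence for $G_n(h,b)=|\B_{n+1}^{(h,b)}(231)|$ by conditioning on $\pi_1=k$, adapting the argument of Lemma~\ref{lema:213}. A permutation with $\pi_1=k$ avoids $231$ exactly when every value smaller than $k$ precedes every value larger than $k$ among $\pi_2,\ldots,\pi_{n+1}$; hence $\pi=k\pi''\pi'$ with $\pi''$ a $231$-avoider on $[k-1]$ and $\pi'$ a $231$-avoider on $[k+1,n+1]$ (the small block comes first, opposite to the large-block-first decomposition of Lemma~\ref{lema:213}). Reading off the induced ballot path gives three cases: $k=1$ forces a first ascent $h\to h+1$, contributing $G_{n-1}(h+1,b)$; $k=n+1$ forces a first descent $h\to h-1$, contributing $G_{n-1}(h-1,b)$; and for $1<k<n+1$ the path descends $h\to h-1$, traverses $\pi''$ to some height $c\ge0$, ascends $c\to c+1$, and traverses $\pi'$ to $b$. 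Tracking sizes in the last case yields
\begin{equation}
G_n(h,b)=G_{n-1}(h+1,b)+G_{n-1}(h-1,b)+\sum_{0\le i\le n-2\atop{0\le c}}G_i(h-1,c)\,G_{n-2-i}(c+1,b).
\end{equation}

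The main work is to obtain this very recurrence for $H_n(h,b)=|\Hw(n;h,b)|$ by decomposing a Gouyou-Beauchamps walk from $(h,0)$ to $(b,0)$ according to its first step. A first step $\rightarrow$ leaves a walk from $(h+1,0)$, giving $H_{n-1}(h+1,b)$; a first step $\leftarrow$ (possible only when $h\ge1$) gives $H_{n-1}(h-1,b)$; the substantial case is a first step $\uparrow$ to $(h,1)$. Here I would follow the walk until its first return to the $x$-axis, necessarily a $\downarrow$ step from some $(c+1,1)$ to $(c+1,0)$. The key observation is that the translation $(x,y)\mapsto(x-1,y-1)$ is a bijection from the shifted region $\{1\le y\le x\}$ onto the full GB region $\{0\le y\le x\}$ and commutes with all four step types, since it preserves both the $x$-axis and the diagonal boundary; applying it to the portion lying at height $\ge1$ (from $(h,1)$ to $(c+1,1)$) converts that portion into a genuine GB walk from $(h-1,0)$ to $(c,0)$, say of $i$ steps. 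The $\downarrow$ accounts for one further step, and the remainder is a GB walk from $(c+1,0)$ to $(b,0)$ of $n-2-i$ steps. Summing over $i$ and $c$ reproduces $\sum_{i,c}H_i(h-1,c)H_{n-2-i}(c+1,b)$, so $H_n$ obeys the same recurrence as $G_n$. Both take the value $\chi(h=b)$ at $n=0$ (the empty walk, respectively the length-one permutation, exists precisely when $h=b$) and both vanish when a height argument is negative, so induction on $n$ gives $G_n(h,b)=H_n(h,b)$, which is~\eqref{iden:231}.

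Finally, to obtain $|\B_n(231)|$ I would specialize to $h=0$ and sum over the endpoint: a length-$n$ ballot permutation is a $(0,b)$-ballot permutation for some $b\ge0$, so $|\B_n(231)|=\sum_{b\ge0}G_{n-1}(0,b)=\sum_{b\ge0}H_{n-1}(0,b)$, the number of $(n-1)$-step GB walks from $(0,0)$ ending on the $x$-axis. By Gouyou-Beauchamps' enumeration this equals $C(\lfloor n/2\rfloor)C(\lfloor(n+1)/2\rfloor)$, as claimed. I expect the principal obstacle to be isolating the correct decomposition of GB walks in the $\uparrow$ case: everything hinges on recognizing the shift $(x,y)\mapsto(x-1,y-1)$ that simultaneously respects the axis and the diagonal and thereby turns the first-passage portion into a standalone GB walk with the shifted endpoints $(h-1,c)$ and $(c+1,b)$ — the Gouyou-Beauchamps analogue of the first-return-to-the-axis argument of Lemma~\ref{lem:ges}. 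The step-count bookkeeping and the check that the cases are disjoint and exhaustive are routine.
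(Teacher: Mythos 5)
Your proposal is correct and follows essentially the same route as the paper: the same first-letter decomposition of $231$-avoiders yielding the recurrence of Lemma~\ref{lema:231} for $G_n(h,b)$, the same first-return-to-the-$x$-axis decomposition of GB walks yielding Lemma~\ref{lem:GB} for $H_n(h,b)$, and then summing over the endpoint and invoking Gouyou-Beauchamps' enumeration. Two minor points, both in your favor: you make explicit the diagonal shift $(x,y)\mapsto(x-1,y-1)$ justifying the factor $H_i(h-1,c)$, which the paper leaves implicit, and your initial condition $\chi(h=b)$ is the correct one --- the paper's stated $\chi(h=b=0)$ is a slip, as one sees from $H_1(0,1)=1$, which the recurrence only reproduces if $H_0(1,1)=1$.
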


We will prove Theorem~\ref{gb:231} by showing  $G_n(h,b)$ and $H_n(h,b)$ share the same recurrence relation. 

\begin{lemma}\label{lem:GB}
The number of GB walks $H_n(h,b)$ satisfies the recurrence relation:
\begin{equation}\label{GB:wal}
H_n(h,b)=H_{n-1}(h+1,b)+H_{n-1}(h-1,b)+\sum_{0\leq i\leq n-2\atop{0\leq a}}H_i(h-1,a)H_{n-2-i}(a+1,b)
\end{equation}
for $n\geq1$ and $h,b\geq0$ with initial condition $H_0(h,b)=\chi(h=b=0)$.
\end{lemma}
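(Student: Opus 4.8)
The plan is to mirror the proof of Lemma~\ref{lem:ges}, classifying each GB walk $G\in\Hw(n;h,b)$ according to its behaviour immediately after the starting point $(h,0)$. Since $G$ must stay in the region $\{0\le y\le x\}$, the step $\downarrow$ is forbidden on the $x$-axis, so the first step of $G$ is one of $\rightarrow$, $\leftarrow$, $\uparrow$. Throughout I would adopt the convention that $H_m(h,b)=0$ whenever $h<0$, so that the boundary case $h=0$ (where only $\rightarrow$ is available) is subsumed automatically.

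First I would dispose of the two horizontal cases. If the first step is $\rightarrow$, deleting it leaves an arbitrary GB walk from $(h+1,0)$ to $(b,0)$ with $n-1$ steps, contributing $H_{n-1}(h+1,b)$; if it is $\leftarrow$, the analogous deletion contributes $H_{n-1}(h-1,b)$ (and this term vanishes when $h=0$, as it should). These account for the first two summands of~\eqref{GB:wal}.

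The substantive case is when the first step is $\uparrow$, which requires $h\ge1$ and lands the walk at $(h,1)$. Here I would decompose $G$ according to the first time it returns to the $x$-axis: write this return as a $\downarrow$ step from some point $(c,1)$ to $(c,0)$ occurring as the $(i+2)$-th step, so that the intermediate portion is an $i$-step walk from $(h,1)$ to $(c,1)$ confined to $\{1\le y\le x\}$, while the suffix after $(c,0)$ is an $(n-2-i)$-step GB walk from $(c,0)$ to $(b,0)$, contributing $H_{n-2-i}(c,b)$. The suffix already has the correct shape. For the excursion, the key move---and the step I expect to be the main obstacle---is to translate it by the vector $(-1,-1)$: this carries $\{1\le y\le x\}$ onto the GB region $\{0\le y\le x\}$ (the diagonal constraint is exactly what forces a diagonal shift rather than the purely vertical shift one might first try), it preserves every step type since it is a translation, and it identifies the excursion with an $i$-step GB walk from $(h-1,0)$ to $(c-1,0)$, i.e.\ $H_i(h-1,c-1)$. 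Setting $a=c-1$ turns the pair of factors into $H_i(h-1,a)H_{n-2-i}(a+1,b)$.

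Finally I would observe that the first step, and in the $\uparrow$ case the first return to the $x$-axis together with its height $c=a+1$ and the number $i$ of excursion steps, are all uniquely determined by $G$, so the classification is a genuine partition of $\Hw(n;h,b)$; summing the three contributions over all admissible $i\ge0$ and $a\ge0$ yields~\eqref{GB:wal}. The initial condition $H_0(h,b)=\chi(h=b=0)$ is immediate, since the only zero-step walk is the empty walk at the origin. The bookkeeping of the index shifts ($h\mapsto h-1$ on the excursion but $c=a+1$ on the suffix) is the one place demanding genuine care, and it is precisely this asymmetry, inherited from the diagonal translation, that distinguishes~\eqref{GB:wal} from its Gessel counterpart~\eqref{ges:wal}.
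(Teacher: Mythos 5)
Your proposal is correct and follows essentially the same route as the paper's proof: both classify walks by the first step ($\rightarrow$, $\leftarrow$, $\uparrow$, with $\downarrow$ forbidden on the $x$-axis) and, in the $\uparrow$ case, decompose at the first return to the $x$-axis to obtain the product term $H_i(h-1,a)H_{n-2-i}(a+1,b)$. The only difference is that you spell out the diagonal $(-1,-1)$ translation identifying the excursion in $\{1\le y\le x\}$ with a GB walk from $(h-1,0)$ to $(a,0)$, a detail the paper leaves implicit, and your justification of it is accurate.
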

\begin{proof}
Consider the first time after the starting point that a GB walk $H\in\Hw(n;h,b)$ visits the $x$-axis: 
\begin{enumerate}
\item The first step of $H$ is $\rightarrow$. The number of such walks is $H_{n-1}(h+1,b)$.
\item The first step of $H$ is $\leftarrow$. The number of such walks is $H_{n-1}(h-1,b)$.
\item The first step of $H$ is $\uparrow$. For fixed integers $a,i\geq0$, the number of such walks that first return the $x$-axis at $(a+1,0)$ after $i+2$ steps is $H_i(h-1,a)H_{n-2-i}(a+1,b)$.
\end{enumerate}
Summing over all the above three cases gives~\eqref{GB:wal}.
\end{proof}

In view of Lemma~\ref{lem:GB}, the following recurrence relation for $G_n(h,b)$ proves Theorem~\ref{gb:231}. 
\begin{lemma}\label{lema:231}
The number of permutations  $G_n(h,b)$ satisfies the recurrence relation:
\begin{equation}\label{231:bal}
G_n(h,b)=G_{n-1}(h+1,b)+G_{n-1}(h-1,b)+\sum_{0\leq i\leq n-2\atop{0\leq a}}G_i(h-1,a)G_{n-2-i}(a+1,b)
\end{equation}
for $n\geq1$ and $h,b\geq0$ with initial condition $G_0(h,b)=\chi(h=b=0)$.
\end{lemma}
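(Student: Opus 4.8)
The plan is to establish~\eqref{231:bal} by a first-letter decomposition, mirroring the proof of Lemma~\ref{lema:213} but with the block order reversed to suit the pattern $231$ in place of $213$. The starting point is the elementary fact that if $\pi\in\S_{n+1}(231)$ has $\pi_1=k$, then using $\pi_1$ as the ``$2$'' of a prospective $231$ pattern shows that no letter larger than $k$ may precede a letter smaller than $k$; consequently $\pi=k\,\pi'\,\pi''$, where $\pi'$ is a $231$-avoiding permutation of $[k-1]$ and $\pi''$ is a $231$-avoiding permutation of $[k+1,n+1]$, the small block $\pi'$ now preceding the large block $\pi''$. Conversely, any concatenation $k\,\pi'\,\pi''$ of this shape is again $231$-avoiding: both blocks consist of consecutive values, so a straddling $231$ pattern is impossible, and each block individually avoids $231$ by hypothesis.

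Next I would read off the three summands of~\eqref{231:bal} by tracking the height of the associated lattice path, which starts at $h$ (with the convention that $G_m(h,b)=0$ whenever $h<0$). If $\pi_1=1$, then $\pi=1\,\pi''$, the first step is an ascent carrying the height to $h+1$, and $\pi''$ is a $231$-avoiding $(h+1,b)$-ballot permutation of $[2,n+1]$; this case contributes $G_{n-1}(h+1,b)$. If $\pi_1=n+1$, then $\pi=(n+1)\,\pi'$, the first step is a descent carrying the height to $h-1$, and $\pi'$ is a $231$-avoiding $(h-1,b)$-ballot permutation of $[n]$, contributing $G_{n-1}(h-1,b)$. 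Finally, if $2\le k\le n$ both blocks are nonempty: the step from $k$ into $\pi'$ is a descent, so $\pi'$ is a $231$-avoiding $(h-1,a)$-ballot permutation ending at some height $a\ge0$, while the step from the last letter of $\pi'$ into $\pi''$ is an ascent, so $\pi''$ is a $231$-avoiding $(a+1,b)$-ballot permutation. Setting $i=k-2$, so that $\pi'$ and $\pi''$ have lengths $i+1$ and $n-1-i$, this case contributes $\sum_{0\le i\le n-2,\,0\le a}G_i(h-1,a)\,G_{n-2-i}(a+1,b)$, and adding the three contributions yields~\eqref{231:bal}.

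The step requiring the most care is the height bookkeeping in the last case, where one must verify that the ballot (nonnegativity) condition for $\pi$ decouples exactly into the ballot conditions for the two standalone blocks. The key observation is that the ascent/descent word of $\pi$ factors as a single descent, then the intrinsic step word of $\pi'$, then a single ascent, then the intrinsic step word of $\pi''$; since $\pi'$ visits heights running from $h-1$ to $a$ and $\pi''$ visits heights running from $a+1$ to $b$, the path of $\pi$ stays weakly above the $x$-axis if and only if each block does so from its prescribed starting height. In particular the $-1$ and $+1$ offsets at the two block boundaries are forced, and this is exactly what distinguishes~\eqref{231:bal} from its $213$-counterpart~\eqref{213:bal}. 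The remaining points—reversibility of the decomposition and the cross-block argument ruling out stray $231$ patterns—are routine, and combined with Lemma~\ref{lem:GB} this recurrence proves Theorem~\ref{gb:231}.
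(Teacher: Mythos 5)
Your proposal is correct and follows essentially the same route as the paper: a first-letter decomposition $\pi=k\,\pi'\,\pi''$ of a $231$-avoiding permutation into a small block on $[k-1]$ followed by a large block on $[k+1,n+1]$, split into the same three cases $\pi_1=1$, $\pi_1=n+1$, and $\pi_1=i+2$, with identical height bookkeeping (descent into $\pi'$ starting at $h-1$, ascent into $\pi''$ starting at $a+1$). You even supply details the paper leaves implicit, namely the converse (that such concatenations are again $231$-avoiding) and the decoupling of the nonnegativity condition across the two blocks, but the argument itself is the paper's.
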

\begin{proof}
Any $231$-avoiding permutation $\pi\in\S_{n+1}(231)$ with $\pi_1=k$ can be decomposed into $k\pi'\pi''$, where $\pi'$ is a $231$-avoiding permutation of $[k-1]$ and $\pi''$ is a $231$-avoiding permutation of $[k+1,n+1]$. Now consider the first letter of a permutation $\pi\in\B_{n+1}^{(h,b)}(231)$:
\begin{enumerate}
\item If $\pi_1=1$, then $\pi=1\pi'$ with $\pi'$ a $231$-avoiding $(h+1,b)$-ballot permutation of  $[2,n+1]$. There are $G_{n-1}(h+1,b)$ such permutations arising in this case. 
\item If $\pi_1=n+1$, then $\pi=(n+1)\pi''$ with $\pi''$ a $231$-avoiding $(h-1,b)$-ballot permutation of  $[n]$. There are $G_{n-1}(h-1,b)$ such permutations arising in this case. 
\item If $\pi_1=i+2$ for some $0\leq i\leq n-2$, then $\pi=(i+2)\pi'\pi''$, where $\pi'$  is a $231$-avoiding permutation of $[i+1]$ and $\pi''$ is a $231$-avoiding permutation of $[i+3,n+1]$. Moreover, if $\pi'$ is a $(h-1,a)$-ballot permutation for some $a\geq0$, then $\pi''$ is a $(a+1,b)$-ballot permutation. Thus, there are $G_{i}(h-1,a)G_{n-2-i}(a+1,b)$ such permutations arising in this case for any fixed integer $a\geq0$. 
\end{enumerate}
Summing over all the above three cases gives~\eqref{231:bal}. 
\end{proof}

In Table~\ref{3-pattern}, we still need to prove the Wilf-equivalences of $213$ and $312$ and of $132$ and $231$ over ballot permutations. They are direct consequence of two simple recursive bijections $\varphi: \S_n(213)\rightarrow\S_n(312)$ and $\eta: \S_n(132)\rightarrow\S_n(231)$ that preserve the positions of descents:
\begin{itemize}
\item If $\pi\in\S_n(213)$, then $\pi=\pi'1\pi''$. Define $\varphi(\pi)=\varphi(\pi'')1\varphi(\pi')$. 
\item If $\pi\in\S_n(132)$, then $\pi=\pi'n\pi''$. Define $\eta(\pi)=\eta(\pi'')n\eta(\pi')$.
\end{itemize}

\section{Final remarks, open problems}
\label{sec:5}

This paper focus mainly on the enumerative aspect of ballot permutations. Two main achievements are:
\begin{enumerate}
\item A Foata-style bijection between ballot permutations and odd order permutations that not only proves bijectively  two conjectures uniformly but also can be extended  perfectly to well-labelled positive paths, a class of generalized ballot permutations arising from  polytope theory. 
\item Complete the enumeration of ballot permutations avoiding a pattern of length $3$. Connections with Gessel walks and Gouyou-Beauchamps walks are established. 
\end{enumerate}

Our work can be extended in several directions. For $\m=(m_1,\ldots,m_n)\in\N^n$, let $\S_{\m}$ be the set of multipermutations of $\{1^{m_1},2^{m_2},\ldots,n^{m_n}\}$. An element $\pi\in\S_{\m}$ is called a {\em ballot multipermutations} if for each $i$, $1\leq i<\sum_{i=1}^nm_n$, there holds
$$
|\{j\in[i]: \pi_j<\pi_{j+1}\}|\geq |\{j\in[i]: \pi_j>\pi_{j+1}\}|.
$$

\begin{?}
For fixed $\m\in\N^n$, can the ballot multipermutations in $\S_{\m}$ be enumerated? 
\end{?}
For classical patterns in ballot permutations, one further direction to continue would be to consider a pair of length-$3$ patterns or a single pattern of length $4$.  Moreover, it would also be interesting to explore systematically ballot permutations avoid consecutive patterns or the more general vincular patterns~\cite{kit}. 

For any $i,j\in\N$, let $g_n(i,j)$ be the number of  $n$-step Gessel walks that starting at $(0,0)$ and ending at $(i, j)$. Bostan and  Kauers~\cite{BK} proved that the  trivariate generating function 
$\sum_{n,i,j\geq0} g_n(i,i)t^nx^iy^j$ is algebraic. In view of their result, we pose the following problem for further research. 

\begin{?}
Are the generating functions for Gessel walks and Gouyou-Beauchamps walks
$$
F(t,x,y)=\sum_{n,h,b\geq0} F_n(h,b)t^nx^hy^b\quad\text{and}\quad H(t,x,y)=\sum_{n,h,b\geq0} H_n(h,b)t^nx^hy^b
$$
algebraic? 
\end{?}

\section*{Acknowledgement}

Lin was supported
by the National Science Foundation of China grant 11871247 and the project of Qilu Young Scholars of Shandong University. Wang  was supported by the National Science Foundation of China grant 11671037. Zhao was supported by the Science Foundation of China University of Petroleum, Beijing (Grant No. 2462020YXZZ004).

\end{document}